\begin{document}

\newcommand{\wk}{\mbox{$\,<$\hspace{-5pt}\footnotesize )$\,$}}

\numberwithin{equation}{section}
\newtheorem{teo}{Theorem}
\newtheorem{lemma}{Lemma}

\newtheorem{coro}{Corollary}
\newtheorem{prop}{Proposition}
\theoremstyle{definition}
\newtheorem{definition}{Definition}
\theoremstyle{remark}
\newtheorem{remark}{Remark}

\newtheorem{scho}{Scholium}
\newtheorem{open}{Question}
\newtheorem{example}{Example}
\numberwithin{example}{section}
\numberwithin{lemma}{section}
\numberwithin{prop}{section}
\numberwithin{teo}{section}
\numberwithin{definition}{section}
\numberwithin{coro}{section}
\numberwithin{figure}{section}
\numberwithin{remark}{section}
\numberwithin{scho}{section}

\bibliographystyle{abbrv}

\title{A new geometric viewpoint on Sturm-Liouville eigenvalue problems}
\date{}

\author{Vitor Balestro  \\ Instituto de Matem\'{a}tica e Estat\'{i}stica \\ Universidade Federal Fluminense \\ 24210201 Niter\'{o}i \\ Brazil \\ vitorbalestro@id.uff.br \and Horst Martini \\ Fakult\"{a}t f\"{u}r Mathematik \\ Technische Universit\"{a}t Chemnitz \\ 09107 Chemnitz\\ Germany \\ martini@mathematik.tu-chemnitz.de \and Ralph Teixeira \\ Instituto de Matem\'{a}tica e Estat\'{i}stica \\ Universidade Federal Fluminense \\ 24210201 Niter\'{o}i \\ Brazil \\ ralph@mat.uff.br}

\maketitle

\begin{abstract}
In Euclidean plane geometry, \emph{cycloids} are curves which are homothetic to their respective bi-evolutes. In smooth normed planes, cycloids can be similarly defined, and they are characterized by their radius of curvature functions being solutions to eigenvalue problems of certain Sturm-Liouville equations. In this paper, we show that, for the eigenvalue $\lambda = 1$, this equation is a previously studied Hill equation which is closely related to the geometry given by the norm. We also investigate which geometric properties can be derived from this equation. Moreover, we prove that if the considered norm is defined by a Radon curve, then the solutions to the Hill equation are given by trigonometric functions. Further, we give conditions under which a given Hill equation induces a planar Minkowski geometry, and we prove that in this case the geometry is Euclidean if an eigenvalue other than $\lambda =1$ induces a reparametrization of the original unit circle.

\end{abstract}

\noindent\textbf{Keywords}: anti-norm, Hill equation, Minkowskian cycloids, normed plane, Radon plane, Sturm-Liouville equations, trigonometric functions.

\bigskip

\noindent\textbf{MSC 2010:} 52A10, 52A21, 52A40, 53A35, 34B24

\section{Introduction}

In Euclidean plane geometry, the \emph{evolute} of a smooth curve is defined  to be the locus of its centers of curvature. This can be extended to any smooth normed plane (= Minkowski plane) by using the concept of \emph{circular curvature} (see \cite{Ba-Ma-Sho}), which is a natural extension of the usual curvature. A \emph{cycloid} is a curve which is homothetic to its \emph{bi-evolute}, that is, the evolute of the evolute. This can also be extended to normed planes, but the bi-evolute is then calculated with respect to the \emph{anti-norm}. This approach was taken in \cite{Cra-Tei-Ba1}, where the classical Sturm-Liouville theory was used to investigate the existence and the properties of closed cycloids (see also \cite{Cra-Tei-Ba2} for the discrete version).

In this paper, we show that the Sturm-Liouville equation of the Minkowskian cycloids with eigenvalue $\lambda = 1$ is precisely the Hill equation studied in \cite{Pet-Bar} from the viewpoint of the geometry of normed planes. Adopting a different approach, we extend the investigations on the relations between this equation and the geometry of the associated Minkowski plane.

In Section \ref{basic}, we outline the basic concepts of planar Minkowski geometry that will be used throughout the text. For basic references on the topic, we refer the reader to the book \cite{Tho} and the surveys \cite{martini2} and \cite{martini1}. As references for the concepts of orthogonality and curvature in normed planes, \cite{alonso} and \cite{Ba-Ma-Sho} should be mentioned; see also \cite{Pet} for curvature notions and \cite{Ma-Wu} for general curve theory in normed planes. In Section \ref{evocycl} we follow \cite{Cra-Tei-Ba1} to define evolutes, bi-evolutes and cycloids in normed planes. We also give a new characterization of closed cycloids, and relate that to the eigenvalues obtained in the mentioned paper.

We prove in Section \ref{radoncycl} that if the norm is Radon, then the cycloids associated to the eigenvalue $\lambda = 1$ are given by the trigonometric functions for normed planes studied in \cite{Ba-Ma-Tei1} and \cite{Ba-Sho}. This is similar to what happens when the geometry is Euclidean, and it is not true if the norm is not Radon.

The last two sections are devoted to the study of the mentioned Hill equation and its relations to the geometry of the considered plane. In Section \ref{hillsec}, we obtain this equation as a particular case of the Minkowskian cycloids (Sturm-Liouville) equation (namely, the case where $\lambda= 1$). We prove that the Hill equation associated to a given normed plane is ``intrinsically" determined by the geometry of the normed plane, meaning that this is invariant under (non-degenerate) affine equivalence. Also, we derive geometric properties of the Minkowski plane from the solutions of the differential equation, such as characterizations of the cases when the norm is Radon or Euclidean. Finally, in Section \ref{condhill} we investigate under what conditions a given Hill equation is associated to the geometry of some Minkowski plane. We also prove that if the solutions associated to eigenvalues of the Sturm-Liouville equation other than $\lambda = 1$ are linear reparametrizations of the original solutions, then the associated geometry is Euclidean.

For the part of the standard theory of ordinary differential equations used here (essentially results on existence and uniqueness of solutions) we refer to the book \cite{coddington}.

\section{Basic theory}\label{basic}

Throughout this text, we work with a normed plane $(\mathbb{R}^2,||\cdot||)$ whose \emph{unit ball} is the set $B:=\{x \in \mathbb{R}^2:||x|| \leq 1\}$. The boundary $\partial B:=\{x\in\mathbb{R}^2:||x|| = 1\}$ of $B$ is called the \emph{unit circle}. We will always assume that $(\mathbb{R}^2,||\cdot||)$ is \emph{strictly convex} and \emph{smooth}. The first condition means that the unit circle $\partial B$ does not contain a line segment, and the second means that $\partial B$ does not have \emph{singular points}, which are points at which the unit ball is supported by more than one line.

We let $\omega:\mathbb{R}^2\times\mathbb{R}^2\rightarrow\mathbb{R}$ be a fixed non-degenerate, skew-symmetric bilinear form (that is, a \emph{symplectic form}). This has to be a scalar multiple of the usual determinant (see \cite{Ma-Swa} for details), and later it will be clear why we do not simply use the standard determinant. Such a symplectic form yields a natural isomorphism between $\mathbb{R}^2$ and its dual space $(\mathbb{R}^2)^*=\mathbb{R}_2$ by contraction in the first variable:
\begin{align*} \mathbb{R}^2\ni x \mapsto \iota_x\omega(\cdot) := \omega(x,\cdot):\mathbb{R}^2\rightarrow\mathbb{R}.
\end{align*}
Using this isomorphism, we may identify the dual norm, defined as usual in $\mathbb{R}_2$, with a norm in $\mathbb{R}^2$, which we call the \emph{anti-norm}:
\begin{align*} ||x||_a := \sup\{\omega(x,y):y \in \partial B\},
\end{align*}
for each $x \in \mathbb{R}^2$. It is not difficult to see that $||\cdot||_a$ is indeed a norm on $\mathbb{R}^2$. Also, the anti-norm is dual to the norm, in the sense that the anti-norm of the anti-norm (with respect to the same fixed symplectic form) is the original norm. For respective proofs we cite \cite{Ma-Swa}. The unit ball and the unit circle in the anti-norm will be called \emph{unit anti-ball} (denoted by $B_a$) and \emph{unit anti-circle} (denoted by $\partial B_a$), respectively.

The importance of the anti-norm comes also from its relation to Birkhoff orthogonality. Given two vectors $x,y \in \mathbb{R}^2$, we say that $x$ is \emph{Birkhoff orthogonal} to $y$ (denoted by $x \dashv_B y$) if $||x|| \leq ||x+ty||$ for any $t \in \mathbb{R}$. Geometrically, $x\dashv_B y$ means that if $x$ and $y$ are non-zero vectors, a line in the direction of $y$ supports the unit ball at $x/||x||$. From this geometric viewpoint, it is clear that Birkhoff orthogonality is not a symmetric relation (for more on Birkhoff orthogonality, and also on other orthogonality concepts in normed spaces, \cite{alonso} should be consulted). In what follows, we denote by $\dashv_B^{\,a}$ the \emph{Birkhoff orthogonality in the anti-norm}.
\begin{prop} For any $x,y \in \mathbb{R}^2$ we have the following inequality:
\begin{align*} |\omega(x,y)| \leq ||x||\cdot||y||_a,
\end{align*}
and equality holds if and only if $x \dashv_B y$. Moreover, the anti-norm reverses Birkhoff orthogonality, that is, $x \dashv_B y$ if and only if $y \dashv_B^{\,a} x$ (see Figure \ref{birkhoff}).
\end{prop}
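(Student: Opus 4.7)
The plan is to establish the inequality first, then characterise equality using strict convexity and smoothness, and finally derive the orthogonality reversal by duality, relying on the fact cited from \cite{Ma-Swa} that the anti-anti-norm coincides with the original norm.

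For the inequality $|\omega(x,y)|\leq\|x\|\cdot\|y\|_a$, I would reduce to the case $x\neq 0$ and write $\omega(x,y)=\|x\|\cdot\omega(x/\|x\|,y)$. The central symmetry of $\partial B$ combined with the skew-symmetry of $\omega$ gives
\begin{align*}
\|y\|_a \;=\; \sup\{\omega(y,z):z\in\partial B\} \;=\; \sup\{|\omega(z,y)|:z\in\partial B\},
\end{align*}
so specialising to $z=x/\|x\|\in\partial B$ yields the bound immediately.

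For the equality case I would show that the supremum above is attained at a unique point $z_0\in\partial B$ up to sign. If two distinct points $z_0,z_1\in\partial B$ both attained the value $\|y\|_a$, linearity would force the midpoint $(z_0+z_1)/2$ to attain it as well; but then that midpoint would have to lie on $\partial B$, so the whole segment $[z_0,z_1]$ would lie in $\partial B$, violating strict convexity. The level set $\{z:\omega(z,y)=\|y\|_a\}$ supports $B$ at $z_0$ and, since $\omega(y,y)=0$, is parallel to $y$; by smoothness it is the unique supporting line at $z_0$. Hence $x\dashv_B y$ if and only if $x/\|x\|=\pm z_0$, which is precisely the equality case $|\omega(x,y)|=\|x\|\cdot\|y\|_a$.

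The orthogonality reversal then follows by applying the statement just proved with the norm and anti-norm interchanged (using $\|\cdot\|_{aa}=\|\cdot\|$): this yields $|\omega(y,x)|\leq\|y\|_a\cdot\|x\|$ with equality if and only if $y\dashv_B^{\,a} x$. Comparing this with the original equality criterion and using $|\omega(y,x)|=|\omega(x,y)|$ shows that $x\dashv_B y$ if and only if $y\dashv_B^{\,a} x$. I expect the only delicate step to be the uniqueness of the maximiser $z_0$, since this is where strict convexity enters in an essential way; the rest of the argument is bookkeeping with the duality of the two norms and the skew-symmetry of $\omega$.
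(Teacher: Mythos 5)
The paper itself gives no proof of this proposition; it simply defers to Martini--Swanepoel \cite{Ma-Swa}. Your argument is a correct, essentially self-contained substitute, and the overall structure (inequality by definition of the sup, equality via a supporting-line argument, reversal by dualizing) is exactly how this is usually done. Two small remarks. First, the uniqueness of the maximiser $z_0$, which you single out as the delicate step, is not actually needed for the stated equivalence: for the direction ``equality $\Rightarrow x\dashv_B y$'' it suffices that the level line $\{z:\omega(z,y)=\|y\|_a\}$ through $x/\|x\|$ supports $B$ and is parallel to $y$, and for the converse it suffices that a supporting line at $x/\|x\|$ in direction $y$ forces $\pm x/\|x\|$ to maximise the linear functional $\omega(\cdot,y)$ over $B$ (a nonconstant linear functional cannot attain its maximum over $B$ at an interior point). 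Strict convexity and smoothness give you uniqueness of the maximiser and of the supporting line, which is nice extra information but not required here; dropping it also spares you from having to check that the anti-norm is again smooth and strictly convex before dualizing. Second, the reversal step leans on the bi-duality $\|\cdot\|_{aa}=\|\cdot\|$, which is itself a nontrivial fact (essentially the bipolar theorem); since the paper states it explicitly before the proposition, citing it is legitimate, but be aware that a fully self-contained proof would have to establish it. With those caveats, the proof is sound.
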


\begin{figure}[h]
\centering
\includegraphics{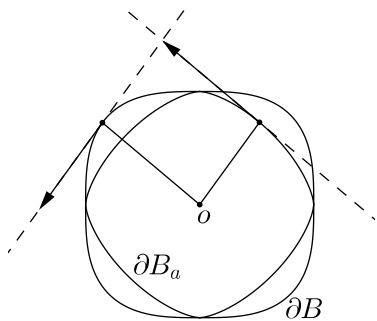}
\caption{The anti-norm reverses Birkhoff orthogonality.}
\label{birkhoff}
\end{figure}

Again, we refer the reader to \cite{Ma-Swa} for a proof, and also for related discussions. A normed plane is called a \emph{Radon plane} if Birkhoff orthogonality is symmetric or, equivalently, if the anti-norm is a multiple of the norm. In this case, up to re-scaling the fixed symplectic form, we can assume that the norm and the anti-norm are equal, and we will always assume that this is done. The unit circle of a Radon plane is called a \emph{Radon curve}.

\begin{remark} The most usual identification between $\mathbb{R}^2$ and its dual is given by the standard inner product $\langle\cdot\,,\cdot\rangle:\mathbb{R}^2\times\mathbb{R}^2\rightarrow\mathbb{R}$. With this identification, the unit ball of the dual norm is identified with the polar body of $\partial B$. The isomorphism given by the usual determinant is, geometrically, a $\pi/2$-rotation of the identification given by the standard inner product. Therefore, a centrally symmetric curve is Radon if and only if it is homothetic to a $\pi/2$-rotation of its polar curve.
\end{remark}

We denote by $\mathbb{R}^2_*$ the set of non-zero vectors of $\mathbb{R}^2$. In a given normed plane $(\mathbb{R}^2,||\cdot||)$, we define the \emph{Minkowskian sine function} $\mathrm{sm}:\mathbb{R}^2_*\times\mathbb{R}^2_*\rightarrow\mathbb{R}$ as
\begin{align*} \mathrm{sm}(x,y) = \frac{\omega(x,y)}{||x||\cdot||y||_a}.
\end{align*}
Notice that this is regarded as a (not necessarily skew-symmetric) map of two unit vectors. Geometrically, the Minkowskian sine function $\mathrm{sm}(x,y)$ is the (signed) distance in the norm from the origin to the line $\mathbb{R} \ni t \mapsto x+ty$.
\begin{prop} For any $x,y \in \partial B$, we have the equality
\begin{align*} |\mathrm{sm}(x,y)| = \inf\{||x+ty||:t \in \mathbb{R}\},
\end{align*}
and $|\mathrm{sm}(x,y)| = 1$ if and only if $x \dashv_B y$. Moreover, the Minkowskian sine function is skew-symmetric if and only if the normed plane is a Radon plane.
\end{prop}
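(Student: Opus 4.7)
The plan is to exploit the inequality $|\omega(x,y)| \le \|x\|\cdot\|y\|_a$ stated in the preceding proposition, since all three claims should follow from it more or less directly.

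For the first claim, I would fix $x, y \in \partial B$ and apply the inequality to the pair $(x+ty, y)$ instead of $(x,y)$. Since $\omega$ is bilinear and alternating, $\omega(x+ty,y) = \omega(x,y)$, so the inequality reads
\[
|\omega(x,y)| \;=\; |\omega(x+ty,y)| \;\le\; \|x+ty\|\cdot\|y\|_a
\]
for every $t \in \mathbb{R}$. Dividing by $\|y\|_a$ (note $\|y\|_a>0$ since $y\neq 0$) and using $\|x\|=1$ gives $|\mathrm{sm}(x,y)| \le \|x+ty\|$, so $|\mathrm{sm}(x,y)| \le \inf_t \|x+ty\|$. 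For the reverse inequality, I need to exhibit a $t_0$ that realizes equality in the preceding proposition. Because the plane is smooth and strictly convex, Birkhoff orthogonality to $y$ determines a unique one-dimensional direction, so one can choose $t_0$ such that $x+t_0 y \dashv_B y$ (geometrically, $t_0$ is the parameter for which the line $t\mapsto x+ty$ meets the subspace Birkhoff-orthogonal to $y$); then $|\omega(x+t_0 y,y)| = \|x+t_0 y\|\cdot\|y\|_a$, which gives the matching upper bound on the infimum.

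The second claim falls out immediately: $|\mathrm{sm}(x,y)|=1$ is, for unit vectors, exactly the equality case $|\omega(x,y)| = \|x\|\cdot\|y\|_a$ of the preceding proposition, which was characterized there as $x \dashv_B y$.

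For the third claim, I would simply translate skew-symmetry of $\mathrm{sm}$ into an identity between the norm and anti-norm. Using the skew-symmetry of $\omega$, the condition $\mathrm{sm}(x,y) = -\mathrm{sm}(y,x)$ becomes
\[
\frac{\omega(x,y)}{\|x\|\cdot\|y\|_a} \;=\; \frac{\omega(x,y)}{\|y\|\cdot\|x\|_a},
\]
so for every $x,y\in\mathbb{R}^2_*$ with $\omega(x,y)\neq 0$ one obtains $\|x\|_a/\|x\| = \|y\|_a/\|y\|$. Since one can always find such a $y$ for any given $x$, the ratio $\|x\|_a/\|x\|$ is a positive constant, which is exactly the condition for the plane to be Radon. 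Conversely, if $\|\cdot\|_a = c\|\cdot\|$, then $\mathrm{sm}(x,y) = \omega(x,y)/(c\|x\|\|y\|)$ is manifestly skew-symmetric. The only non-routine ingredient in the whole argument is the existence of the minimizing $t_0$ in the first part, and that is secured by smoothness and strict convexity of the norm.
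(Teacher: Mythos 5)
Your argument is correct. Note that the paper does not actually prove this proposition --- it defers to the reference [Ba-Ma-Tei1] --- so there is no in-text proof to compare against; what you give is the natural derivation from the preceding proposition, and it is the standard one. Two small points you may want to tidy: the existence of the minimizing $t_0$ can be obtained even more directly by noting that $t \mapsto \|x+ty\|$ is convex and coercive, hence attains its minimum, and at a minimizer one has $x+t_0y \dashv_B y$ by definition of Birkhoff orthogonality (your route via the uniqueness of the Birkhoff-orthogonal line also works, but should mention the degenerate case $x+t_0y=0$, i.e.\ $x\parallel y$, where both sides equal $0$); and in the last part, to conclude that $\|x\|_a/\|x\|$ is constant for \emph{all} nonzero $x$ you should observe that two parallel vectors can be compared by chaining through a third vector not parallel to either, since the identity is only obtained directly for pairs with $\omega(x,y)\neq 0$.
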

For the proof, and also for related discussions involving the Minkowskian sine function, we refer to \cite{Ba-Ma-Tei1}. Notice that the Minkowskian sine function can be defined also for normed planes which are not smooth or strictly convex. But this is not the case for our next task, that is, to introduce an extension of the elementary cosine function (for that purpose we have to assume smoothness).

If $(\mathbb{R}^2,||\cdot||)$ is a smooth normed plane, then there is a map $b:\mathbb{R}^2_*\rightarrow\mathbb{R}^2_*$ which associates each $x \in \mathbb{R}^2_*$ to the unique vector $b(x)$ such that
\begin{align*} x \dashv_B b(x), \ \mathrm{and} \\
\omega(x,b(x)) = ||x||.
\end{align*}
Notice that the equality above implies that $b(x) \in \partial B_a$, and that the basis $\{x,b(x)\}$ is positively oriented. The \emph{Minkowskian cosine function} $\mathrm{cm}:\mathbb{R}^2_*\times\mathbb{R}^2_*\rightarrow\mathbb{R}$ is defined as
\begin{align*} \mathrm{cm}(x,y) = \frac{\omega(y,b(x))}{||y||}.
\end{align*}
Here we can also give a geometric interpretation: for given $x,y \in \partial B$, we have that $\mathrm{cm}(x,y)$ is the (signed) value of the distance to the origin from the parallel through $y$ of the line which supports the unit circle at $x$. For more on the Minkowskian cosine function, the reader should consult \cite{Ba-Sho}.
In the case where $\omega$ is the usual determinant, it is easy to see that the Minkowskian sine and cosine functions are extensions of the elementary sine and cosine functions of Euclidean geometry. It is also clear that we have the equality
\begin{align*} \mathrm{cm}(x,y) = \mathrm{sm}(y,b(x)),
\end{align*}
for any $x,y \in \mathbb{R}^2_*$. Moreover, the Minkowskian sine and cosine functions are \emph{positively homogeneous}. This means that $\mathrm{sm}(\alpha x,\beta y) = \mathrm{sm}(x,y)$ for any $x,y \in \mathbb{R}^2_*$ and $\alpha,\beta> 0$, and the same holds for the Minkowskian cosine function. This means that these functions can be regarded as defined for oriented directions rather than for vectors.

From now on, beyond assuming that $(\mathbb{R}^2,||\cdot||)$ is strictly convex, we \textbf{always} assume that the unit circle $\partial B$ is also a smooth curve, meaning that it admits a parametrization which has derivatives of all orders (we often need less regularity, but we demand smoothness for the sake of simplicity), and whose Euclidean curvature does not vanish. The \emph{Minkowskian length} (which we will call simply \emph{length}) of a curve $\gamma:[a,b]\rightarrow(\mathbb{R}^2,||\cdot||)$ can be defined by polygonal approximations, as usual. If $\gamma$ is smooth, then one can prove that this definition is equivalent to
\begin{align*} l(\gamma) = \int_a^b||\gamma'(s)|| \, ds.
\end{align*}
Since $\partial B$ is smooth (as a curve) and regular, it admits an arc-length parametrization $\varphi(t):\mathbb{R}\,\mathrm{mod}\,l(\partial B)\rightarrow \mathbb{R}^2$. Let $\gamma(s):[0,l(\gamma)]\rightarrow\mathbb{R}^2$ be a smooth curve parametrized by arc-length. We define the function $t(s):[0,l(\gamma)]\rightarrow\mathbb{R}\,\mathrm{mod}\,l(\partial B)$ intrinsically by the equality
\begin{align*} \gamma'(s) = \frac{d\varphi}{dt}(t(s)).
\end{align*}
Geometrically, for each $s \in [0,l(\gamma)]$ the number $t(s) \in \mathbb{R}\,\mathrm{mod}\,l(\partial B)$ is the value of the parameter for which $\gamma'(s)$ is the unit tangent vector to $\partial B$ at $\varphi(t(s))$. We define the \emph{circular curvature} (or simply \emph{curvature}) of $\gamma$ at $\gamma(s)$ to be the number
\begin{align*} k(s) := t'(s),
\end{align*}
for each $s \in [0,l(\gamma)]$. Intuitively, the curvature measures ``how quickly" the unit tangent vector field of $\gamma$ rotates when identified as a tangent vector field of the unit circle. For concepts of curvature in normed planes, we mention once more the expository paper \cite{Ba-Ma-Sho}. The importance of this particular curvature concept for our purposes comes from its contact interpretation: if $k(s) \neq 0$, then $k(s)^{-1}$ is the radius of an osculating circle attached to $\gamma$ at $\gamma(s)$. Indeed, we can notice that the derivative of the reparametrization $\varphi(t(s)):[0,l(\gamma)]\rightarrow\mathbb{R}^2$ at a given fixed point $s_0 \in [0,l(\gamma)]$, where the curvature does not vanish, is given as
\begin{align*} \varphi'(s_0) = t'(s_0)\cdot\frac{d\varphi}{dt}(t(s_0)) = k(s_0)\cdot\gamma'(s_0),
\end{align*}
and hence the Minkowski circle, parametrized (at least locally) by
\begin{align*} s \mapsto \gamma(s_0) - k(s_0)^{-1}\cdot\varphi(s_0) + k(s_0)^{-1}\cdot\varphi(s)\,,
\end{align*}
has second order contact with $\gamma$ at $\gamma(s_0)$. For that reason, the number $\rho(s) := k(s)^{-1}$ is called the \emph{radius of curvature} of $\gamma$ at $\gamma(s)$. The point $c(s) := \gamma(s) - \rho(s)\varphi(s)$ is called the \emph{center of curvature} of $\gamma$ at $\gamma(s)$, as it is illustrated in Figure \ref{osculating}.

\begin{figure}[h]
\centering
\includegraphics{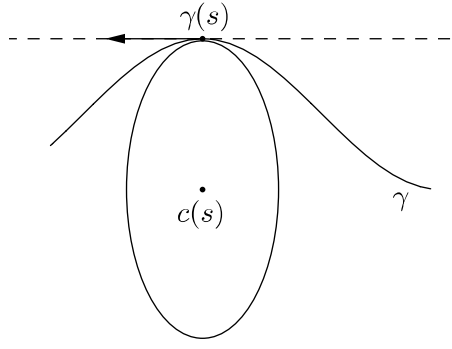}
\caption{The (Minkowskian) osculating circle of $\gamma$ at $\gamma(s)$.}
\label{osculating}
\end{figure}

This interpretation will be particularly important for us, because it allows to re-obtain the curvature in a way which is independent of the fixed parametrizations of the curve and of the unit circle. We let $\varphi(\theta)$ be any smooth regular parametrization of $\partial B$, and $\gamma$ be a smooth curve. We endow $\gamma$ with a parametrization $\gamma(\theta)$ such that, for each $\theta$, the tangent vector $\gamma'(\theta)$ is a non-negative multiple of $\varphi'(\theta)$. In other words, we have a smooth non-negative function $\rho(\theta)$ such that
\begin{align*} \gamma'(\theta) = \rho(\theta)\cdot\varphi'(\theta).
\end{align*}
It follows that $\rho(\theta)$ is precisely the radius of curvature of $\gamma$ at $\gamma(\theta)$, and hence its inverse is the curvature (in the points where $\rho$ does not vanish, of course). A proof is presented in \cite{Ba-Ma-Sho}.

\section{Evolutes and cycloids}\label{evocycl}

Recall that we are assuming that $(\mathbb{R}^2,||\cdot||)$ is a smooth and strictly convex normed plane, and that its unit circle $\partial B$ admits a smooth parametrization by arc-length which we denote by $\varphi(t):\mathbb{R}\,\mathrm{mod}\,l(\partial B)\rightarrow\mathbb{R}^2$. Since it is clear that $\varphi(t) \dashv_B \varphi'(t)$, we have that $\omega(\varphi(t),\varphi'(t)) = ||\varphi'(t)||_a$. Hence we can parametrize the unit anti-circle $\partial B_a$ by the map $\psi(t):\mathbb{R}\,\mathrm{mod}\,l(\partial B)\rightarrow \mathbb{R}^2$ defined by
\begin{align*} \psi(t) = \frac{\varphi'(t)}{\omega(\varphi(t),\varphi'(t))}.
\end{align*}
From our hypothesis on $(\mathbb{R}^2,||\cdot||)$ and $\partial B$, it follows immediately that this is a smooth regular parametrization. Notice that, for each $t$, we have $\psi(t) \dashv_B^{\,a}\psi'(t)$, and since the anti-norm reverses Birkhoff orthogonality, we get $\psi'(t) \dashv_B \psi(t)$. This leads to the equality
\begin{align*} \omega(\psi(t),\psi'(t)) = ||\psi'(t)||,
\end{align*}
for each $t \in \mathbb{R}\,\mathrm{mod}\,l(\partial B)$. Also, from uniqueness of Birkhoff orthogonality, we have that $\psi'(t)$ is in the direction of $\varphi(t)$. Hence $\psi$ can be regarded as a dual parametrization of $\varphi$, since we obviously have also the equality $\omega(\varphi(t),\varphi'(t)) = ||\varphi'(t)||_a$. 

\begin{remark}Recall that we are assuming that the Euclidean curvature of $\varphi$ does not vanish, from where we get $\varphi'' \neq 0$, and hence $\psi'\neq 0$. Notice that this condition can be replaced by demanding straightforwardly that $\varphi''$ does not vanish for an arc-length parametrization $\varphi$ of the unit circle. 
\end{remark}

Let $\gamma$ be a smooth regular curve. As at the end of the previous section, we consider a parametrization $\gamma(t)$ such that the tangent vector $\gamma'(t)$ is always a multiple of $\varphi'(t)$, but not necessarily non-negative. In other words, we have
\begin{align*} \gamma'(t) = \rho(t)\cdot\varphi'(t),
\end{align*}
for each $t$, where $\rho(t) \in [0,+\infty)$ is the (signed) radius of curvature of $\gamma$ at $\gamma(t)$. In the points where $\rho(t) \neq 0$, we have that $\rho(t)^{-1}$ is the curvature of $\gamma$ at $\gamma(t)$ for any $t$. The \emph{evolute} of $\gamma$ is the curve defined as
\begin{align*} \xi(t) := \gamma(t) - \rho(t)\cdot\varphi(t).
\end{align*}
Geometrically, the evolute of a curve is the locus of its centers of curvature (except for the points where $\rho(t) = 0$). In the next lemma we show that it is easy to calculate the curvature of the evolute in the geometry induced by the anti-norm.
\begin{lemma} The tangent vector $\xi'(t)$ of the evolute is always parallel to $\psi'(t)$. Also, the radius of curvature of the evolute in the anti-norm is given by
\begin{align*} \delta(t) := \frac{\rho'(t)}{\omega(\psi(t),\psi'(t))},
\end{align*}
for any $t$.
\end{lemma}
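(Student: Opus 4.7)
The plan is to compute $\xi'(t)$ directly, recognize it as a multiple of $\psi'(t)$, and then read off the coefficient using the characterization of the radius of curvature given at the end of Section \ref{basic}.

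First I would differentiate $\xi(t)=\gamma(t)-\rho(t)\varphi(t)$ and substitute $\gamma'(t)=\rho(t)\varphi'(t)$; the $\rho(t)\varphi'(t)$ terms cancel, leaving
\begin{align*}
\xi'(t)=-\rho'(t)\,\varphi(t).
\end{align*}
From the paragraph preceding the remark, we know that $\psi'(t)$ is a scalar multiple of $\varphi(t)$, so the first claim $\xi'(t)\parallel\psi'(t)$ follows immediately.

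Next, write $\psi'(t)=\lambda(t)\varphi(t)$. To identify $\lambda(t)$ in intrinsic terms, plug this into $\omega(\psi(t),\psi'(t))$, use $\psi(t)=\varphi'(t)/\omega(\varphi(t),\varphi'(t))$, and exploit the skew-symmetry of $\omega$ to obtain
\begin{align*}
\omega(\psi(t),\psi'(t)) = \frac{\lambda(t)\,\omega(\varphi'(t),\varphi(t))}{\omega(\varphi(t),\varphi'(t))} = -\lambda(t).
\end{align*}
Hence $\varphi(t) = -\psi'(t)/\omega(\psi(t),\psi'(t))$, and substituting back into the formula for $\xi'(t)$ gives
\begin{align*}
\xi'(t)=\frac{\rho'(t)}{\omega(\psi(t),\psi'(t))}\,\psi'(t).
\end{align*}

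Finally I would invoke the characterization of the radius of curvature recalled at the end of Section \ref{basic}, but applied to the anti-norm: since $\psi$ is a regular parametrization of the unit anti-circle $\partial B_a$, if a curve $\xi$ satisfies $\xi'(t)=\delta(t)\psi'(t)$, then $\delta(t)$ is precisely the (signed) radius of curvature of $\xi$ at $\xi(t)$ measured in the anti-norm. The preceding display then identifies $\delta(t)=\rho'(t)/\omega(\psi(t),\psi'(t))$, which is the required formula. The only delicate point in all of this is being careful with the sign when relating $\lambda(t)$ to $\omega(\psi(t),\psi'(t))$; everything else is routine once the duality relation between $\varphi$ and $\psi$ is in place.
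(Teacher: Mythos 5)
Your proposal is correct and follows essentially the same route as the paper: differentiate $\xi$ to get $\xi'(t)=-\rho'(t)\varphi(t)$, use the duality relation $\varphi(t)=-\psi'(t)/\omega(\psi(t),\psi'(t))$ (you derive it by writing $\psi'=\lambda\varphi$ rather than $\varphi=\alpha\psi'$, which is the same computation), and then read off the anti-norm radius of curvature from the coefficient of $\psi'(t)$. The sign bookkeeping you flag as the delicate point is handled correctly.
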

\begin{proof} Differentiating $\xi(t)$, we obtain
\begin{align}\label{antiradius} \xi'(t) = \gamma'(t) - \rho(t)\cdot\varphi'(t) - \rho'(t)\cdot\varphi(t) = -\rho'(t)\cdot\varphi(t).
\end{align}
On the other hand, since $\psi'(t)$ points in the same direction as $\varphi(t)$, we may write $\varphi(t) = \alpha(t)\cdot\psi'(t)$ for some function $\alpha$. Hence
\begin{align*}  \alpha(t)\cdot\omega(\psi(t),\psi'(t)) =  \omega(\psi(t),\varphi(t)) = \frac{\omega(\varphi'(t),\varphi(t))}{\omega(\varphi(t),\varphi'(t))} = -1,
\end{align*}
from which we obtain
\begin{align*} \varphi(t) = -\frac{\psi'(t)}{\omega(\psi(t),\psi'(t))}.
\end{align*}
Substituting this equality in (\ref{antiradius}), we finally get
\begin{align*} \xi'(t) = \frac{\rho'(t)}{\omega(\psi(t),\psi'(t))}\cdot\psi'(t).
\end{align*}
Therefore, we have indeed that $\delta(t)$ is the radius of curvature of $\xi(t)$ in the geometry given by the anti-norm (recall that $\psi(t)$ is a smooth regular parametrization of the unit anti-circle).

\end{proof}

Since now we have an expression for the radius of curvature of $\xi$ in the anti-norm, we can naturally define the evolute of $\xi$ in the anti-norm as
\begin{align*} \eta(t) = \xi(t) - \frac{\rho'(t)}{\omega(\psi(t),\psi'(t))}\cdot \psi(t).
\end{align*}
This is clearly the locus of the centers of the osculating anti-circles of $\xi$. The curve $\eta$ is called the \emph{bi-evolute} of $\gamma$. By duality, we obtain immediately that $\eta'(t)$ is parallel to $\varphi'(t)$ for each $t$, and that the radius of curvature of $\eta$ in the original norm is given by
\begin{align*} \rho_{\eta}(t) := -\frac{1}{\omega(\varphi(t),\varphi'(t))}\cdot\left(\frac{\rho'(t)}{\omega(\psi(t),\psi'(t))}\right)'.
\end{align*}

A \emph{Minkowskian cycloid} is a smooth curve which is homothetic to its bi-evolute. From the previous discussion, it is clear that the Minkowskian cycloids are precisely the solutions of the following Sturm-Liouville differential equation, whose variable is $u$:
\begin{align}\label{cycleq} \frac{1}{\omega(\varphi(t),\varphi'(t))}\cdot\left(\frac{u'(t)}{\omega(\psi(t),\psi'(t))}\right)' = -\lambda \cdot u(t)
\end{align}
for some number $\lambda > 0$. Indeed, if $\rho(t)$ is a solution of the differential equation above, then we simply define the curve $\gamma$ to be obtained from the equality $\gamma'(t) = \rho(t)\cdot\varphi'(t)$ by integration, and this is clearly a Minkowskian cycloid. Notice that this is unique up to translation.

Now we investigate the conditions for the solutions of (\ref{cycleq}) under which the \-cor\-res\-pon\-ding\ cycloid is a closed curve from a viewpoint slightly different to that used in \cite{Cra-Tei-Ba1}. This approach is somehow the inspiration for Theorem \ref{condition}. First of all, from now on we adopt, for the sake of convenience, the convention
\begin{align*} \pi := \frac{l(\partial B)}{2}.
\end{align*}

\begin{prop} For any $\pi$-periodic solution $\rho$ of (\ref{cycleq}), with a given $\lambda \in\mathbb{R}$, any associated cycloid is a closed curve.
\end{prop}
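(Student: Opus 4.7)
The plan is to exploit central symmetry of the unit circle, which manifests itself in the arc-length parametrization $\varphi$ as the identity $\varphi(t+\pi) = -\varphi(t)$, and hence $\varphi'(t+\pi) = -\varphi'(t)$. Since the associated cycloid $\gamma$ satisfies $\gamma'(t) = \rho(t)\cdot\varphi'(t)$ and $\rho$ is assumed $\pi$-periodic, this immediately gives the anti-periodicity
\begin{align*}
\gamma'(t+\pi) = \rho(t+\pi)\cdot\varphi'(t+\pi) = -\rho(t)\cdot\varphi'(t) = -\gamma'(t).
\end{align*}

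Starting from this, I would show that $\gamma$ itself is $2\pi$-periodic by computing the displacement over a full period and splitting it at the midpoint:
\begin{align*}
\gamma(t+2\pi) - \gamma(t) = \int_t^{t+\pi}\gamma'(s)\,ds + \int_{t+\pi}^{t+2\pi}\gamma'(s)\,ds.
\end{align*}
In the second integral I would substitute $s = u+\pi$ and apply the anti-periodicity $\gamma'(u+\pi) = -\gamma'(u)$, so the two integrals cancel and $\gamma(t+2\pi) = \gamma(t)$. Since $l(\partial B) = 2\pi$ by the adopted convention, this is exactly closure of the cycloid.

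There is no substantial obstacle here; the content of the argument is (i) justifying $\varphi(t+\pi) = -\varphi(t)$ from the fact that the arc-length parametrization of a centrally symmetric smooth convex curve of total length $2\pi$ shifts to its antipode after time $\pi$, and (ii) recognizing that $\pi$-periodicity of the \emph{radius of curvature} produces $\pi$-anti-periodicity of the tangent vector field of $\gamma$, which is precisely what closes the curve after time $2\pi$. The only minor subtlety worth recording explicitly is that $\gamma$ itself need not be $\pi$-periodic (indeed, $\gamma(t+\pi) - \gamma(t)$ is generally non-constant in $t$), so $\pi$-periodicity of $\rho$ yields a cycloid of period $2\pi$ rather than $\pi$.
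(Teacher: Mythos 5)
Your argument is correct and is essentially identical to the paper's proof: both split the displacement $\gamma(t+2\pi)-\gamma(t)$ at the midpoint, shift the second integral by $\pi$, and cancel using the $\pi$-periodicity of $\rho$ together with the $\pi$-antiperiodicity of $\varphi'$. Your explicit remark that $\gamma$ need not be $\pi$-periodic is a reasonable addition but does not change the substance.
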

\begin{proof} An associated cycloid is such that $\gamma'(t) = \rho(t)\cdot\varphi'(t)$, and hence it is given by integration as
\begin{align*} \gamma(t) = \gamma(0) + \int_0^t\rho(s)\cdot\varphi'(s)\,ds.
\end{align*}
Therefore, we simply calculate
\begin{align*} \gamma(t+2\pi) = \gamma(0) + \int_0^{t+2\pi}\rho(s)\cdot\varphi'(s)\,ds = \\= \gamma(0) + \int_0^{t}\rho(s)\cdot\varphi'(s)\,ds + \int_{t}^{t+\pi}\rho(s)\cdot\varphi'(s)\,ds + \int_{t+\pi}^{t+2\pi}\rho(s)\cdot\varphi'(s)\,ds = \\ = \gamma(t) + \int_t^{t+\pi}\rho(s)\cdot\varphi'(s)\,ds + \int_t^{t+\pi}\rho(s+\pi)\cdot\varphi'(s+\pi)\,ds = \\ = \gamma(t) + \int_t^{t+\pi}\rho(s)\cdot\varphi'(s)\,ds - \int_t^{t+\pi}\rho(s)\cdot\varphi'(s)\,ds = \gamma(t),
\end{align*}
where the last equality is justified since $\rho$ is $\pi$-periodic and $\varphi'$ is $\pi$-antiperiodic.

\end{proof}

\begin{prop} Let $\rho$ be a solution of (\ref{cycleq}) for $\lambda \neq 1$. If $\rho(0) = -\rho(\pi)$ and $\rho'(0) = -\rho'(\pi)$, then the associated cycloid is closed.
\end{prop}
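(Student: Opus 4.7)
The plan is to first upgrade the endpoint conditions $\rho(0)=-\rho(\pi)$ and $\rho'(0)=-\rho'(\pi)$ to global $\pi$-antiperiodicity of $\rho$ via a uniqueness argument, and then reduce the closedness of $\gamma$ to the vanishing of a single integral, which will fall out of two integrations by parts applied to the ODE (and this is exactly where the hypothesis $\lambda \neq 1$ is used).

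For the first step, observe that $\partial B$ and $\partial B_a$ are centrally symmetric, so $\varphi$, $\varphi'$, $\psi$, $\psi'$ are all $\pi$-antiperiodic; consequently the coefficients $\omega(\varphi,\varphi')$ and $\omega(\psi,\psi')$ appearing in (\ref{cycleq}) are $\pi$-periodic. It follows that if $\rho$ solves (\ref{cycleq}), then so does $\tilde\rho(t) := -\rho(t+\pi)$. By the hypothesis, $\tilde\rho(0)=\rho(0)$ and $\tilde\rho'(0)=\rho'(0)$, so uniqueness for the Cauchy problem (as in \cite{coddington}) gives $\rho(t+\pi) = -\rho(t)$ for every $t$.

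Since $\varphi'$ is also $\pi$-antiperiodic, the product $\rho\,\varphi'$ is $\pi$-periodic, and mimicking the translation computation of the preceding proposition yields $\gamma(t+2\pi)-\gamma(t) = 2I$, where
\begin{align*}
I := \int_0^\pi \rho(s)\,\varphi'(s)\,ds.
\end{align*}
Thus it suffices to show $I=0$. Using $\varphi' = \omega(\varphi,\varphi')\,\psi$ (from the definition of $\psi$), the ODE lets me substitute $\rho\,\omega(\varphi,\varphi') = -\lambda^{-1}\left(\rho'/\omega(\psi,\psi')\right)'$. Integration by parts over $[0,\pi]$ yields a boundary term $\left[\rho'\,\psi/\omega(\psi,\psi')\right]_0^\pi$ that vanishes because $\rho'(\pi)=-\rho'(0)$, $\psi(\pi)=-\psi(0)$, and $\omega(\psi,\psi')$ is $\pi$-periodic. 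Substituting $\psi' = -\omega(\psi,\psi')\,\varphi$ (the identity obtained inside the previous lemma) into the surviving integral turns it into $-\lambda^{-1}\int_0^\pi \rho'\,\varphi\,ds$. A second integration by parts again annihilates the boundary contribution (now via $\rho(\pi)=-\rho(0)$ and $\varphi(\pi)=-\varphi(0)$) and returns $\lambda^{-1} I$. Hence $(1-\lambda^{-1})I = 0$, and $\lambda \neq 1$ forces $I=0$, so $\gamma$ is closed.

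The only real obstacle I anticipate is the sign bookkeeping across the two integrations by parts; however, the antiperiodicity structure of $\varphi$, $\psi$, $\rho$ is precisely what is needed to kill every boundary term, and the role of $\lambda\neq 1$ is exactly to prevent the final identity $I=\lambda^{-1} I$ from being vacuous.
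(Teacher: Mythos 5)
Your proof is correct, but the decisive step is genuinely different from the paper's. Both arguments begin the same way: the function $t\mapsto-\rho(t+\pi)$ solves (\ref{cycleq}) (the coefficients are $\pi$-periodic by central symmetry of $\partial B$ and $\partial B_a$), matches $\rho$ and $\rho'$ at $t=0$, and uniqueness upgrades the endpoint conditions to $\pi$-antiperiodicity. From there the paper introduces the Minkowskian support function $h_\gamma(t)=\omega(\gamma(t),\psi(t))$, recovers $\gamma$ explicitly from $h_\gamma$, and observes that $h_\gamma=\rho/(1-\lambda)$ satisfies the defining relation between $\rho$ and $h_\gamma$; periodicity of $h_\gamma$ then closes the curve. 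You instead reduce closedness to the vanishing of the displacement $I=\int_0^\pi\rho\,\varphi'\,ds$ and derive $I=\lambda^{-1}I$ by two integrations by parts, using $\varphi'=\omega(\varphi,\varphi')\,\psi$ and $\psi'=-\omega(\psi,\psi')\,\varphi$; I checked the signs and the boundary terms, and everything is consistent --- the antiperiodicity of $\rho$, $\varphi$, $\psi$ together with the $\pi$-periodicity of $\omega(\psi,\psi')$ does kill both boundary contributions. Your route is essentially a self-adjointness (eigenfunction-orthogonality) argument and is more elementary, needing no auxiliary geometric object; the paper's route buys more, namely an explicit formula for the cycloid up to translation, $\gamma=\tfrac{\rho}{1-\lambda}\varphi+\tfrac{\rho'}{(1-\lambda)\omega(\psi,\psi')}\psi$, which identifies the curve rather than merely proving it closed. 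One cosmetic caveat: dividing by $\lambda$ tacitly assumes $\lambda\neq 0$ (harmless here since the cycloid equation is posed for $\lambda>0$, and the $\lambda=0$ case forces $\rho\equiv 0$ anyway); multiplying through to get $(\lambda-1)I=0$ avoids the issue entirely.
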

\begin{proof} The boundary conditions, together with existence and uniqueness of solutions, guarantee that $\rho$ is $2\pi$-periodic. We define the \emph{Minkowskian support function} of $\gamma$ to be $h_{\gamma}(t) = \omega(\gamma(t),\psi(t))$. Geometrically, this is the (signed) Minkowskian distance from $\gamma(t)$ to the origin $o$ (see Figure \ref{support}). It is easy to see that if $\gamma'(t) = \rho(t)\cdot\varphi'(t)$ then, up to a translation, $\gamma$ is given as
\begin{align*} \gamma(t) = h_{\gamma}(t)\cdot\varphi(t) + \frac{h'_{\gamma}(t)}{\omega(\psi(t),\psi'(t))}\cdot\psi(t).
\end{align*}
Consequently, it is sufficient to prove that $h_{\gamma}$ is $2\pi$-periodic. Differentiating the equality above, we have that
\begin{align*} \gamma' = \left(h_{\gamma} + \frac{1}{\omega(\varphi,\varphi')}\cdot\left(\frac{h_{\gamma}'}{\omega(\psi,\psi')}\right)'\right)\varphi',
\end{align*}
where we omitted the variable, for the sake of simplicity of notation. It follows that the radius of curvature is related to the Minkowskian support function by
\begin{align*} \rho = h_{\gamma} + \frac{1}{\omega(\varphi,\varphi')}\cdot\left(\frac{h_{\gamma}'}{\omega(\psi,\psi')}\right)'.
\end{align*}
Solving this equation in the variable $h_{\gamma}$, where we regard $\rho$ as a fixed solution of (\ref{cycleq}) with $\lambda \neq 1$, we have that
\begin{align*} h_{\gamma}(t) = \frac{\rho(t)}{1-\lambda}
\end{align*}
is a solution. Since this solution is unique up to initial conditions $h_{\gamma}(0)$ and $h'_{\gamma}(0)$, we get that $h_{\gamma}$ is indeed the Minkowskian support function of $\gamma$, up to a translation. Namely, we must have
\begin{align*} \gamma(0) = \frac{\rho(0)}{1-\lambda}\cdot\varphi(0) + \frac{\rho'(0)}{(1-\lambda)\omega(\varphi'(0),\psi'(0))}\cdot\varphi'(0).
\end{align*}
It follows that the Minkowskian support function of $\gamma$ is $2\pi$-periodic, and hence $\gamma$ is a closed curve.

\end{proof}

\begin{figure}[h]
\centering
\includegraphics{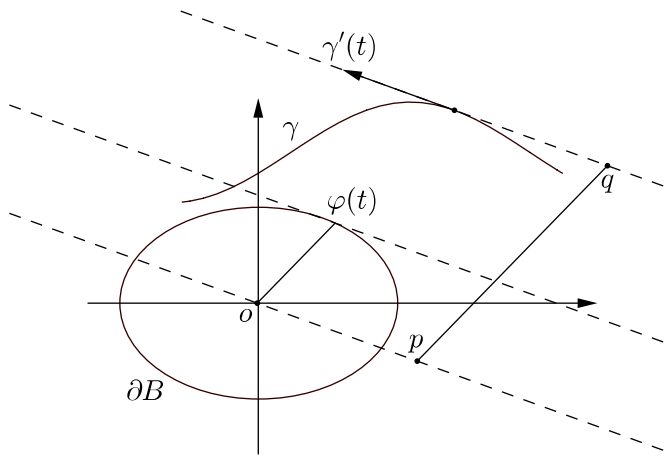}
\caption{$h_{\gamma}(t)$ is the (oriented) length of the segment $\mathrm{seg}(p,q)$.}
\label{support}
\end{figure}

\begin{remark} Notice that closed cycloids appear when $\lambda$ is such that the equation (\ref{cycleq}) has a $\pi$-periodic or $\pi$-antiperiodic (in the case where $\lambda \neq 1$) solution. In this case, we call $\lambda$ an \emph{eivengalue}. In \cite{Cra-Tei-Ba1}, classical Sturm-Liouville analysis was used to ensure that eigenvalues do indeed exist, and can be ordered in a non-decreasing sequence:
\begin{align*} \lambda_0 = 0 < \lambda_1^1=\lambda_1^2 = 1 < \lambda_2^1\leq \lambda_2^2 < \lambda_3^1\leq \lambda_3^2<\ldots <\lambda_k^1\leq\lambda_k^2<\ldots\,.
\end{align*}
From Theorem 4.1 of the mentioned paper, it follows that the eigenvalues with $k$ even correspond to $\pi$-periodic solutions, and the eigenvalues with $k$ odd give $\pi$-antiperiodic solutions.
\end{remark}

\section{Cycloids in Radon planes}\label{radoncycl}

In this section we aim to show that, in the particular case of Radon planes, the cycloids obtained with $\lambda = 1$ are given by trigonometric functions (as in the Euclidean case). We assume that $(\mathbb{R}^2,||\cdot||)$ is a Radon plane, endowed with a symplectic form $\omega$ for which the associated anti-norm $||\cdot||_a$ is equal to the original norm. We also still assume that $\varphi(t):\mathbb{R}\,\mathrm{mod}\,l(\partial B)\rightarrow \mathbb{R}^2$ is a smooth arc-length parametrization of the unit circle, and that
\begin{align*} \psi(t) = \frac{\varphi'(t)}{\omega(\varphi(t),\varphi'(t))}
\end{align*}
is the associated dual parametrization of the unit anti-circle $\partial B_a$, which is simply the unit circle $\partial B$. Under these hypotheses, notice that
\begin{align*} \omega(\varphi(t),\varphi'(t)) = ||\varphi(t)||\cdot||\varphi'(t)||_a = ||\varphi'(t)|| = 1,
\end{align*}
and consequently we have $\psi(t) = \varphi'(t)$. Notice very carefully that $\psi(t)$ may not be an arc-length parametrization of the unit circle (this is indeed the case if the norm is not Euclidean, see \cite{Ba-Sho} for a proof). For that reason, we define the function $\delta:\mathbb{R}\,\mathrm{mod}\,l(\partial B)\rightarrow \mathbb{R}^2$ by
\begin{align*} \delta(t) := ||\varphi''(t)||.
\end{align*}
Also, another expression for $\delta$ is obtained by the equality
\begin{align*} \omega(\psi(t),\psi'(t)) = \omega(\varphi'(t),\varphi''(t)) = ||\varphi'(t)||\cdot||\varphi''(t)|| = \delta(t),
\end{align*}
for each $t \in \mathbb{R}\,\mathrm{mod}\,l(\partial B)$. Therefore, in a Radon plane, the Minkowskian cycloids equation (\ref{cycleq}) becomes
\begin{align}\label{cyclradon} \left(\frac{u'}{\delta}\right)' = -\lambda\cdot u,
\end{align}
for $\lambda > 0$. In the Euclidean case, the equation above reads $u'' = -\lambda\cdot u$, and its solutions are constructed from the standard sine and cosine functions. In the general case above, standard theory of ordinary differential equations guarantees the existence of solutions, and uniqueness under given initial conditions $u(0)$ and $u'(0)$. We refer the reader to \cite{coddington} for a good reference on the theory of ordinary differential equations.

Our main theorems state that the solutions of the Minkowskian cycloids equation in a Radon plane can be obtained from the Minkowskian trigonometric functions defined in Section \ref{basic}, in a similar way as in the Euclidean case. We do it first for the case where $\lambda = 1$.

\begin{teo}\label{lambda1} If $\lambda = 1$, then any solution of the equation (\ref{cyclradon}) is of the form
\begin{align*} f(t) = \alpha\cdot\mathrm{sm}(\varphi'(t),\varphi'(0)) + \beta\cdot\mathrm{cm}(\varphi(t),\varphi(0)),
\end{align*}
for some numbers $\alpha,\beta \in \mathbb{R}$ which depend on the initial conditions.
\end{teo}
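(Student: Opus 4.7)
The plan is to exhibit two explicit solutions of the ODE \eqref{cyclradon} at $\lambda = 1$, verify linear independence, and conclude by the existence/uniqueness theorem for linear second order ODEs. Concretely, I set
\[ s(t) := \mathrm{sm}(\varphi'(t),\varphi'(0)) \quad \text{and} \quad c(t) := \mathrm{cm}(\varphi(t),\varphi(0)), \]
and first simplify these expressions under the Radon assumption. Since $\varphi$ is parametrized by arc length and the anti-norm coincides with the norm, $\|\varphi'(t)\| = \|\varphi'(0)\|_a = 1$, so $s(t) = \omega(\varphi'(t),\varphi'(0))$. For $c(t)$, the relations $\varphi(t)\dashv_B\varphi'(t)$ and $\omega(\varphi(t),\varphi'(t)) = 1 = \|\varphi(t)\|$ force $b(\varphi(t)) = \varphi'(t)$, whence $c(t) = \omega(\varphi(0),\varphi'(t))$.

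The key computational input is the identity $\varphi''(t) = -\delta(t)\,\varphi(t)$. This is immediate from $\psi = \varphi'$ (valid in the Radon setting after the chosen normalization) together with the formula $\varphi(t) = -\psi'(t)/\omega(\psi(t),\psi'(t))$ derived in Section \ref{evocycl}. Using it, one differentiation of $s$, division by $\delta$, and a second differentiation give
\[ \frac{s'(t)}{\delta(t)} = -\omega(\varphi(t),\varphi'(0)), \qquad \left(\frac{s'}{\delta}\right)'(t) = -\omega(\varphi'(t),\varphi'(0)) = -s(t), \]
so $s$ solves \eqref{cyclradon} at $\lambda = 1$. The exact same computation, with $\varphi'(0)$ replaced by $\varphi(0)$ in the left slot of $\omega$, shows that $(c'/\delta)' = -c$.

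To finish, I check linear independence at $t = 0$: one reads off $s(0) = 0$, $c(0) = \omega(\varphi(0),\varphi'(0)) = 1$, $s'(0) = -\delta(0)$, and $c'(0) = 0$, so the Wronskian-type determinant equals $\delta(0) \neq 0$ (nonvanishing of $\delta$ follows from our regularity hypothesis that $\varphi''$ does not vanish). Since \eqref{cyclradon} is a linear second order ODE with smooth coefficients and nonvanishing leading coefficient, its solution space is two-dimensional by the theory in \cite{coddington}; hence $\{s,c\}$ is a basis and every solution has the claimed form, with $\alpha,\beta$ determined by the initial data $f(0),f'(0)$.

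I do not foresee a serious obstacle: the only care required is in correctly simplifying $\mathrm{sm}$ and $\mathrm{cm}$ under the Radon identification $\|\cdot\|_a = \|\cdot\|$, and in consistently applying $\varphi'' = -\delta\,\varphi$. Once those two reductions are in hand, the verification is a short direct calculation.
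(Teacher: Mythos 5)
Your proposal is correct and follows essentially the same route as the paper: reduce $\mathrm{sm}$ and $\mathrm{cm}$ to $\omega(\varphi'(t),\varphi'(0))$ and $\omega(\varphi(0),\varphi'(t))$ using the Radon normalization, apply $\varphi''=-\delta\varphi$ to verify both solve \eqref{cyclradon} at $\lambda=1$, and invoke the two-dimensionality of the solution space. Your explicit Wronskian check at $t=0$ makes the linear independence step (which the paper leaves implicit) cleanly justified.
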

\begin{proof} Notice first that $u_0(t) := \mathrm{sm}(\varphi'(t),\varphi'(0)) = \omega(\varphi'(t),\varphi(0))$. Also, it is clear that $\varphi''(t) = -\delta(t)\varphi(t)$. Hence
\begin{align*} \left(\frac{u_0'(t)}{\delta(t)}\right)' = \left(\frac{\omega(\varphi''(t),\varphi'(0))}{\delta(t)}\right)' = -\omega(\varphi(t),\varphi'(0))' = -\omega(\varphi'(t),\varphi'(0)) = -u_0(t),
\end{align*}
and hence $u_0(t)$ is a solution of (\ref{cyclradon}) with $\lambda = 1$. Now define $u_1(t):=\mathrm{cm}(\varphi(t),\varphi(0))$. Since it is clear that $b(\varphi(t)) = \varphi'(t)$, we get
\begin{align*} \left(\frac{u_1'(t)}{\delta(t)}\right)' = \left(\frac{\omega(\varphi(0),\varphi''(t))}{\delta(t)}\right)' = -\omega(\varphi(0),\varphi(t))' = -\omega(\varphi(0),\varphi'(t)) = -u_1(t),
\end{align*}
from which $u_1$ is also a solution of (\ref{cyclradon}) with $\lambda = 1$. The fact that any solution has to be a linear combination of $u_0$ and $u_1$ comes from the theory of standard ordinary differential equations.

\end{proof}

\section{An approach to Hill's equation} \label{hillsec}

A \emph{Hill equation} is a differential equation of the form
\begin{align}\label{hill1} u''(t)+ f(t)\cdot u(t) = 0,
\end{align}
where $f(t)$ is a periodic function. In the paper \cite{Pet-Bar}, Petty and Barry studied this equation from the viewpoint of Minkowski geometry. In this section, we show that this equation can be obtained as the Minkowskian cycloids equation for $\lambda = 1$. We also argue that this is affine invariant, in the sense that geometries whose unit balls are affinely equivalent yield the same equation. Our last task is to study geometric properties that can be derived from the solutions.

Throughout this section, we assume that $\psi(t):\mathbb{R}\,\mathrm{mod}\,l(\partial B_a)\rightarrow(\mathbb{R}^2,||\cdot||)$ is a parametrization of the unit anti-circle by the arc-length \textbf{of the original norm}. Notice that this is also a parametrization by twice the area of the sectors, meaning that the area of the sector of the unit anti-ball swept from $\psi(t_1)$ to $\psi(t_2)$ equals $t_2 - t_1$. The dual parametrization of the unit circle is the map $\varphi:\mathbb{R}\,\mathrm{mod}\,l(\partial B_a)\rightarrow (\mathbb{R}^2,||\cdot||)$ given as
\begin{align}\label{dualparam1} \varphi(t) = \frac{\psi'(t)}{\omega(\psi(t),\psi'(t))} = \psi'(t),
\end{align}
where we notice carefully that $\omega(\psi(t),\psi'(t)) = 1$ for any $t$. Indeed, since $\psi(t) \dashv_B^{\,a}\psi'(t)$, we get that $\psi'(t) \dashv_B \psi(t)$, and hence
\begin{align*} \omega(\psi(t),\psi'(t)) = ||\psi'(t)||\cdot||\psi(t)||_a = 1.
\end{align*}
It is easy to see that these parametrizations are dual in the sense that the following equality holds:
\begin{align}\label{dualparam2} \psi(t) = -\frac{\varphi'(t)}{\omega(\varphi(t),\varphi'(t))},
\end{align}
for each $t \in \mathbb{R}\,\mathrm{mod}\,l(\partial B_a)$. Fixing these parametrizations, the Minkowskian cycloids equation (\ref{cycleq}) with $\lambda = 1$ can be written as
\begin{align}\label{hilleq} u''(t) + \omega(\varphi(t),\varphi'(t))\cdot u(t) = 0,
\end{align}
which is a Hill equation. Using solutions for a Hill equation, Petty and Barry constructed a curve which they called \emph{indicatrix}. We will go the inverse path, showing that the unit anti-circle yields two linearly independent solutions of (\ref{hilleq}), and hence all solutions (by linear combinations).
\begin{prop}\label{solutions} Decompose $\psi(t)$ in the basis $\{\psi(0),\psi'(0)\}$ as
\begin{align*} \psi(t) = \omega(\psi(t),\psi'(0))\psi(0) - \omega(\psi(t),\psi(0))\psi'(0).
\end{align*}
Then the functions $u_1(t) := \omega(\psi(t),\psi'(0))$ and $u_2(t) := -\omega(\psi(t),\psi(0))$ are linearly independent solutions of (\ref{hilleq}) defined in $\mathbb{R}\,\mathrm{mod}\,l(\partial B_a)$.
\end{prop}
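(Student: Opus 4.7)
The plan is to read off the decomposition from the symplectic normalization, then produce a second-order ODE for the vector-valued map $\psi(t)$ itself, after which the two scalar equations fall out by linearity of $\omega$.

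First I would verify the decomposition. As observed right after (\ref{dualparam1}), $\omega(\psi(t),\psi'(t)) = 1$ identically, so in particular $\omega(\psi(0),\psi'(0)) = 1$; thus $\{\psi(0),\psi'(0)\}$ is a symplectic basis of $\mathbb{R}^2$. Applying $\omega(\,\cdot\,,\psi'(0))$ and $\omega(\,\cdot\,,\psi(0))$ to any linear combination $a\psi(0) + b\psi'(0)$ recovers $a$ and $-b$, respectively. Substituting $\psi(t)$ gives exactly the proposed decomposition with $a = u_1(t)$ and $b = u_2(t)$.

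The main step, and the only place where geometry really enters, is to show that $\psi$ itself satisfies a vector Hill equation. From (\ref{dualparam1}) one has $\psi'(t) = \varphi(t)$, hence $\psi''(t) = \varphi'(t)$. On the other hand, solving (\ref{dualparam2}) for $\varphi'(t)$ yields
\begin{align*}
\varphi'(t) = -\omega(\varphi(t),\varphi'(t))\,\psi(t).
\end{align*}
Combining these two identities,
\begin{align*}
\psi''(t) + \omega(\varphi(t),\varphi'(t))\,\psi(t) = 0.
\end{align*}
Once this is in hand, differentiating $u_1(t) = \omega(\psi(t),\psi'(0))$ and $u_2(t) = -\omega(\psi(t),\psi(0))$ twice under the bilinear form (the reference vectors $\psi(0)$ and $\psi'(0)$ are constants) and substituting the vector equation immediately gives $u_i''(t) + \omega(\varphi(t),\varphi'(t))\,u_i(t) = 0$ for $i = 1,2$.

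For linear independence I would compute the Wronskian at $t = 0$: the relations $\omega(\psi(0),\psi'(0)) = 1$, $\omega(\psi'(0),\psi'(0)) = 0$, and $\omega(\psi(0),\psi(0)) = 0$ give $u_1(0)=1$, $u_1'(0)=0$, $u_2(0)=0$, $u_2'(0)=1$, so $W(0)=1\neq 0$ and the two solutions are independent. The only subtlety worth flagging is making sure $\psi$ is indeed twice differentiable so that the identity $\psi'' = \varphi'$ is legitimate; this is guaranteed by the standing smoothness hypothesis on $\partial B$ (and hence on the dual parametrization $\varphi = \psi'$), so no extra regularity argument is needed.
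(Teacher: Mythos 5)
Your proposal is correct and follows essentially the same route as the paper: both rest on the identities $\psi'=\varphi$ from (\ref{dualparam1}) and $\varphi'=-\omega(\varphi,\varphi')\,\psi$ from (\ref{dualparam2}), your vector equation $\psi''+\omega(\varphi,\varphi')\psi=0$ being just a clean repackaging of the substitution the paper performs directly inside $u_1''+\omega(\varphi,\varphi')u_1$. Your explicit Wronskian check of linear independence at $t=0$ is a welcome addition that the paper leaves implicit.
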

\begin{proof} For $u_1$, notice that (\ref{dualparam1}) and (\ref{dualparam2}) give
\begin{align*} u_1''(t) + \omega(\varphi(t),\varphi'(t))\cdot u_1(t) = \omega(\varphi'(t),\varphi(0)) + \omega(\varphi(t),\varphi'(t))\cdot\omega\left(\frac{-\varphi'(t)}{\omega(\varphi(t),\varphi'(t))},\varphi(0)\right),
\end{align*}
and the latter is clearly equal to $0$. For $u_2$ we simply perform a similar calculation.

\end{proof}

\begin{remark} Observe that the unit anti-ball of a normed plane can be constructed from the solutions of the Hill equation (\ref{hill1}) in case we know \emph{a priori} that the function $f(t)$ is given as $f(t) = \omega(\varphi(t),\varphi'(t))$ for some periodic parametrization $\varphi(t)$ of the boundary of a centrally symmetric convex body (which is the unit circle of the norm).
\end{remark}

Notice carefully that the solutions $u_1,u_2:\mathbb{R}\,\mathrm{mod}\,l(\partial B_a)$ are the unique solutions with initial conditions $u_1(0) = 1$, $u_1'(0) = 0$ and $u_2(0) = 1$, $u_2'(0) = 1$. Moreover, the Wronksian $W(u,v) := u'\cdot v - u\cdot v'$ is constant for any solutions $u,v$, as the following calculation shows:
\begin{align*} W(u,v)'(t) = u\cdot v'' - u''\cdot v = u\cdot(-f\cdot v) - (-f\cdot u)\cdot v = 0.
\end{align*}
Hence the Wronksian only depends on the initial conditions, and for the particular solutions $u_1,u_2$ we have $W(u_1,u_2) = 1$. The geometric interpretation of that is given in the next theorem.
\begin{teo} Up to an affine transformation, the unit anti-circle $\partial B_a$ is obtained from any pair of linearly independent solutions of (\ref{hilleq}). In particular, the planar Minkowski geometry is given intrinsically from the respective differential equation.
\end{teo}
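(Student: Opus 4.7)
The plan is to use Proposition \ref{solutions} to single out a canonical pair of linearly independent solutions whose values literally are the coordinates of $\psi(t)$ in a natural basis, and then to observe that any other pair of linearly independent solutions differs from this one by a fixed linear change of coordinates on $\mathbb{R}^2$.

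More concretely, first I would take the distinguished solutions $u_1(t) = \omega(\psi(t),\psi'(0))$ and $u_2(t) = -\omega(\psi(t),\psi(0))$ furnished by Proposition \ref{solutions}. The decomposition displayed there reads $\psi(t) = u_1(t)\psi(0) + u_2(t)\psi'(0)$, so the planar curve $\Psi(t) := (u_1(t), u_2(t))$ is precisely the coordinate expression of $\psi(t)$ in the basis $\{\psi(0),\psi'(0)\}$. Equivalently, if $A \in GL(2,\mathbb{R})$ denotes the linear isomorphism sending $\psi(0)\mapsto e_1$ and $\psi'(0)\mapsto e_2$, then $A\circ\psi = \Psi$, and hence the image of $\Psi$ is $\partial B_a$ transported by the affine (in fact, linear) map $A$.

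Next I would handle an arbitrary pair. Let $v_1,v_2$ be any two linearly independent solutions of (\ref{hilleq}). Because the solution space of the second-order linear equation is two-dimensional, there exists $M \in GL(2,\mathbb{R})$ with $(v_1(t),v_2(t))^{T} = M(u_1(t),u_2(t))^{T}$ for every $t$; non-singularity of $M$ is equivalent to linear independence, which one can also read off the Wronskian, constant in $t$ by the calculation already performed in the excerpt. Then the curve $t\mapsto(v_1(t),v_2(t))$ equals $M\circ\Psi = (MA)\circ\psi$, so it too is affinely equivalent to $\partial B_a$. This proves the first assertion.

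For the intrinsic claim I would note that the coefficient $f(t)=\omega(\varphi(t),\varphi'(t))$ of (\ref{hilleq}), together with the structure of its two-dimensional solution space, allows us to recover $\partial B_a$ up to an affine transformation, and then via the duality formula $\varphi = -\psi'/\omega(\psi,\psi')$ recovers $\partial B$ and the norm itself up to a linear equivalence — i.e., the entire planar Minkowski geometry. The only genuine subtlety I foresee is the bookkeeping around the choice of base point $t=0$ and the frame $\{\psi(0),\psi'(0)\}$; different choices merely precompose $\Psi$ with a further element of $GL(2,\mathbb{R})$, so they do not affect the conclusion up to affine equivalence. Beyond Proposition \ref{solutions} and the elementary fact that the solution space of a second-order linear ODE is two-dimensional, no further machinery is required, so there is no real obstacle — the theorem is essentially a linear-algebra reformulation of the proposition already proved.
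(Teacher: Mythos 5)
Your argument is correct and is essentially the paper's own proof: both express an arbitrary pair of independent solutions as a linear combination of the canonical pair $u_1,u_2$ from Proposition \ref{solutions} (you via an invertible matrix $M$, the paper via initial conditions), and both conclude that the resulting curve is the image of $\psi$ under an injective linear map. The only cosmetic difference is that you fix the standard basis of $\mathbb{R}^2$ while the paper allows arbitrary independent vectors $x,y$; this changes nothing up to affine equivalence.
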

\begin{proof} We claim that, for any linearly independent solutions $u,v$ of (\ref{hilleq}) and any linearly independent vectors $x,y \in \mathbb{R}^2$, the curve
\begin{align*} \gamma(t) = u(t)\cdot x + v(t)\cdot y
\end{align*}
is an affine image of the unit anti-circle $\psi(t)$. Indeed, if $u(0) = c_1$, $u'(0) = c_2$, and $v(0) = d_1$, $v'(0) = d_2$, then we may write
\begin{align*} u(t) = c_1\cdot u_1(t) + c_2\cdot u_2(t), \ \mathrm{and} \\
v(t) = d_1\cdot u_1(t) + d_2\cdot u_2(t).
\end{align*}
Hence it is clear that $\gamma(t) = A(\psi(t))$, where $A:\mathbb{R}^2\rightarrow\mathbb{R}^2$ is the linear transformation such that
\begin{align*} A(\psi(0)) = c_1\cdot x + d_1\cdot y, \ \mathrm{and} \\
A(\psi'(0)) = c_2\cdot x + d_2\cdot y.
\end{align*}
Here we recall that $\psi(t) = u_1(t)\cdot\psi(0) + u_2(t)\cdot\psi'(0)$. Notice that since $x$ and $y$ are linearly independent (as vectors), and $u_1,u_2$ are also linearly independent (as functions), it follows that $A$ is injective.

\end{proof}

\begin{remark} Of course, the geometry given by a fixed norm in a vector space is not affine invariant. However, Minkowski geometry is affine invariant in the following sense: if $K\subseteq \mathbb{R}^n$ is a centrally symmetric convex body, and $A:\mathbb{R}^n\rightarrow\mathbb{R}^n$ is an injective linear map, then the Minkowski geometries whose unit balls are $K$ and $A(K)$ are essentially the same.
\end{remark}

Conversely, it is worth mentioning that the equation (\ref{hilleq}) is uniquely determined by a given norm up to an affine transformation.

\begin{teo} Two norms whose unit balls are linearly equivalent yield the same Hill equation (\ref{hilleq}).
\end{teo}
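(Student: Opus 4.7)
The plan is to show that if $B' = A(B)$ for an invertible linear map $A\colon\mathbb{R}^2\to\mathbb{R}^2$, then equipping the new normed plane with the \emph{pullback} symplectic form $\omega'(u,v) := \omega(A^{-1}u, A^{-1}v)$ produces exactly the same coefficient function in the Hill equation (\ref{hilleq}). This is the natural candidate because it ensures that $A$ behaves like an isometry at the level of all the structures appearing in the derivation of the equation.

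First I would check that $\omega'$ is again a non-degenerate skew-symmetric bilinear form (immediate from invertibility of $A$). Next, writing the new norm as $\|Ax\|' := \|x\|$, I would verify that $A$ carries the anti-ball of the original geometry to the anti-ball of the new one: substituting into the definition of the anti-norm,
\begin{align*}
\|Au\|'_a = \sup\{\omega'(Au,w) : w \in \partial B'\} = \sup\{\omega(u,A^{-1}w) : A^{-1}w \in \partial B\} = \|u\|_a,
\end{align*}
so in particular $A(\partial B_a) = \partial B'_a$, and the lengths of these curves in the respective norms coincide, so the parameter domains match.

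Then I would track how the two distinguished parametrizations transform. Since $\|A\psi'(t)\|' = \|\psi'(t)\| = 1$, the curve $\tilde{\psi}(t) := A\psi(t)$ parametrizes $\partial B'_a$ by arc-length in the new norm. From $\omega'(A\psi, A\psi') = \omega(\psi, \psi') = 1$, the dual parametrization formula (\ref{dualparam1}) in the new geometry reduces to $\tilde{\varphi}(t) = \tilde{\psi}'(t) = A\psi'(t) = A\varphi(t)$. Consequently,
\begin{align*}
\omega'\bigl(\tilde{\varphi}(t),\tilde{\varphi}'(t)\bigr) = \omega'\bigl(A\varphi(t),A\varphi'(t)\bigr) = \omega\bigl(\varphi(t),\varphi'(t)\bigr),
\end{align*}
which is exactly the coefficient of $u$ in the original Hill equation. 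Hence both geometries yield the same equation on the same parameter interval.

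The main obstacle is not computational but conceptual: one must recognize that the content of the theorem depends on choosing an appropriate symplectic form for the second geometry. Once the pullback $\omega'$ is identified as the right choice, every step above reduces to a one-line verification that the relevant quantities are preserved by $A$. A minor subtlety worth flagging is that $\omega'$ is automatically a scalar multiple of $\omega$ in dimension two, which is consistent with the paper's remark that one does not want to fix $\omega$ as the standard determinant.
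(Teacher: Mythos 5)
Your proof is correct, and it takes a genuinely different route from the paper's. The paper keeps the symplectic form $\omega$ fixed for both geometries, transports the parametrization as $\bar{\psi} = A\circ\psi$ (which, as it concedes, is then no longer an arc-length parametrization in the new geometry), and checks that the factors of $\det A$ appearing in $\omega(A\psi,A\psi') = \det A$ and in $\omega(\bar{\varphi},\bar{\varphi}') = (\det A)^{-1}\,\omega(\varphi,\varphi')$ cancel when substituted into the general Sturm--Liouville form (\ref{cycleq}); identifying the result with the canonical Hill equation (\ref{hilleq}) of the new plane thus rests on the covariance of (\ref{cycleq}) under this non-normalized parametrization. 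You instead rescale the symplectic form to the pullback $\omega' = \omega(A^{-1}\cdot\,,A^{-1}\cdot) = (\det A)^{-1}\omega$, which makes $A$ a simultaneous isomorphism of the norm, the anti-norm and the symplectic structure; then $A\circ\psi$ genuinely is the arc-length parametrization of the new anti-circle, formula (\ref{dualparam1}) applies verbatim, and the coefficient $\omega'(A\varphi,A\varphi') = \omega(\varphi,\varphi')$ is literally the canonical one, so nothing needs to cancel. What your approach buys is transparency: it isolates exactly where the freedom to rescale $\omega$ (which the paper reserves in Section \ref{basic}, promising that its purpose will become clear) is actually used, namely to absorb $\det A$. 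What it costs is that, strictly speaking, you prove ``the same equation for a suitable admissible choice of $\omega$'' rather than ``for the same $\omega$''; since the two forms differ only by the scalar $(\det A)^{-1}$ and the paper's own $\omega$ is fixed only up to scalars (it is rescaled, for instance, whenever the plane is Radon), this is the intended reading, and your closing remark correctly identifies the point. One loose end shared with the paper: both arguments implicitly assume $\det A > 0$; for orientation-reversing $A$ one must also reverse the parameter, $t\mapsto -t$, to keep the sign conventions in (\ref{dualparam1})--(\ref{dualparam2}), which is harmless.
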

\begin{proof} As usual, let $\psi$ be a parametrization of the unit anti-circle $\partial B_a$ by arc-length in the norm, and let $A$ be a injective linear map. Denoting by $||\cdot||_{\bar{a}}$ the norm whose unit ball is $A(B_a)$, observe that $\bar{\psi} = A\circ\psi$ is clearly a parametrization of $A(\partial B_a)$, but not necessarily by arc-length in the dual norm of $||\cdot||_{\bar{a}}$. Hence we have to obtain the associated Hill equation as in (\ref{cycleq}). The dual parametrization $\bar{\varphi}$ of $\bar{\psi}$ is given by
\begin{align*} \bar{\varphi}(t) = -\frac{A\psi'(t)}{\omega(A\psi(t),A\psi'(t))} = -\frac{A\psi'(t)}{\mathrm{det}A} = -\frac{A\varphi(t)}{\mathrm{det}A},
\end{align*}
from which
\begin{align*} \omega(\bar{\varphi}(t),\bar{\varphi}'(t)) = \frac{1}{(\mathrm{det}A)^2}\cdot\omega(A\varphi(t),A\varphi'(t)) = \frac{1}{\mathrm{det}A}\cdot\omega(\varphi(t),\varphi'(t))
\end{align*}
follows. Since $\omega(A\psi(t),A\psi'(t)) = \mathrm{det}A\cdot\omega(\psi(t),\psi'(t))$, we have that the equation (\ref{cycleq}), with the parametrizations $\bar{\varphi}$ and $\bar{\psi}$ and $\lambda = 1$, reads precisely
\begin{align*} u''(t) + \omega(\varphi(t),\varphi'(t))\cdot u(t) = 0,
\end{align*}
and this concludes the proof.

\end{proof}

\begin{coro} The Minkowskian sine and cosine functions determine a Radon plane uniquely, up to an affine transformation.
\end{coro}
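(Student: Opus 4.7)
The strategy is to combine Theorem \ref{lambda1} with the first reconstruction theorem of the present section. The upshot is that, in a Radon plane, the Minkowskian sine and cosine functions explicitly furnish two linearly independent solutions of the Hill equation, and such a pair of solutions already determines the unit anti-circle (hence, under the Radon hypothesis, the unit circle) up to an affine transformation.

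Concretely, I would fix a Radon plane with its arc-length parametrization $\varphi$ of the unit circle, and observe that Theorem \ref{lambda1} provides
\begin{align*}
u_0(t) := \mathrm{sm}(\varphi'(t),\varphi'(0)), \qquad u_1(t) := \mathrm{cm}(\varphi(t),\varphi(0))
\end{align*}
as two linearly independent solutions of the Radon cycloid equation (\ref{cyclradon}) with $\lambda=1$. In a Radon plane, under the adopted normalization of $\omega$, the unit circle coincides with the unit anti-circle, and consequently the arc-length parametrization $\varphi$ of $\partial B$ is simultaneously the arc-length parametrization of $\partial B_a$; moreover, the two dual parametrizations used in Sections \ref{radoncycl} and \ref{hillsec} both reduce to $\varphi'$. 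Once this matching of conventions is verified, equation (\ref{cyclradon}) is exactly the Hill equation (\ref{hilleq}), so the first theorem of this section applies to the pair $(u_0,u_1)$: for any two linearly independent vectors $x,y \in \mathbb{R}^2$, the curve $t \mapsto u_0(t)\,x + u_1(t)\,y$ is an affine image of $\partial B_a = \partial B$.

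The conclusion then reads off immediately: the Minkowskian sine and cosine functions determine two linearly independent solutions of the Hill equation attached to the Radon plane, these solutions determine the Hill equation itself (via $f = -u_0''/u_0$ outside the zeros of $u_0$), and by the reconstruction theorem the Hill equation in turn recovers the unit anti-circle up to affine transformation, which in the Radon case is the unit circle. The main point requiring attention is exactly the bookkeeping between the parametrization conventions of the two sections, which I expect to be straightforward once the identity $\psi=\varphi'$ and the coincidence of norm with anti-norm are used; no other ingredient beyond Theorem \ref{lambda1} and the first theorem of Section \ref{hillsec} seems to be needed.
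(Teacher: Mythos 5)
Your overall route is the same as the paper's: Theorem \ref{lambda1} supplies the Minkowskian sine and cosine as a pair of linearly independent solutions, and the reconstruction theorem at the start of Section \ref{hillsec} turns any such pair into an affine copy of the unit anti-circle. However, the one step you flag as ``the main point requiring attention'' --- the identification of (\ref{cyclradon}) with (\ref{hilleq}) --- is false as you state it. In a Radon plane the Section \ref{hillsec} parametrization $\psi$ of $\partial B_a$ by arc-length in the norm is indeed the arc-length parametrization $\varphi$ of $\partial B$ from Section \ref{radoncycl}, so the coefficient of the Hill equation is $\omega(\varphi',\varphi'')=\delta$; hence (\ref{hilleq}) reads $u''+\delta\,u=0$, whereas (\ref{cyclradon}) with $\lambda=1$ is $(u'/\delta)'=-u$, i.e. $u''-(\delta'/\delta)u'+\delta u=0$. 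These are different equations whenever $\delta$ is non-constant, i.e. whenever the Radon plane is not Euclidean. Concretely, the solutions of (\ref{hilleq}) are the functions $t\mapsto\omega(\varphi(t),c)$ for constant $c$, while the solutions of (\ref{cyclradon}) are their derivatives $t\mapsto\omega(\varphi'(t),c)$, and the functions $u_0,u_1$ of Theorem \ref{lambda1} belong to the second family, not the first. The same confusion affects your parenthetical recipe $f=-u_0''/u_0$, which does not return the Hill coefficient when $u_0$ solves (\ref{cyclradon}).

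The conclusion nevertheless holds, and the repair is short, but you must choose one of two honest routes rather than assert the equations coincide. Either (a) observe directly that $\varphi'(t)=u_0(t)\,\varphi(0)+u_1(t)\,\varphi'(0)$, so that for any linearly independent $x,y$ the curve $t\mapsto u_0(t)\,x+u_1(t)\,y$ is a linear image of $\varphi'$, which in a Radon plane is again a (non-arc-length) parametrization of $\partial B=\partial B_a$; the argument of the reconstruction theorem then applies verbatim to this pair even though it is not a pair of solutions of (\ref{hilleq}). Or (b) invoke Proposition \ref{solutions}: in the Radon case its two solutions $\omega(\psi(t),\psi'(0))$ and $-\omega(\psi(t),\psi(0))$ are exactly $\mathrm{cm}(\varphi(0),\varphi(t))$ and $\mathrm{sm}(\varphi(0),\varphi(t))$ --- the trigonometric functions with the roles of the two arguments exchanged relative to Theorem \ref{lambda1} --- and the reconstruction theorem applies to them as stated. (The paper's own two-line proof glosses over the same point, so you are in good company, but the step you dismissed as routine bookkeeping is precisely where the claim as written breaks down.)
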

\begin{proof} From Theorem \ref{lambda1}, the Minkowskian sine and cosine functions are linearly independent solutions of (\ref{hilleq}). Hence the result comes immediately from the theorem above.

\end{proof}

\begin{remark} This corollary is highly intuitive, and possibly one can find a purely geometric proof. However, we opted for stating it here since this is easily obtained as an application of the fact that the geometry of the normed plane is intrinsically related to the corresponding Hill equation.
\end{remark}

Based on the last theorems, from now on we use the expression \emph{geometry associated to the equation (\ref{hilleq})} to refer to the geometries given by the norm and anti-norm constructed (as above) from its solutions. Now we investigate which geometric properties can be derived ``analytically" from the solutions.

As usual, we let $\psi$ be a parametrization of the unit anti-circle by arc-length in the norm, and we denote by $\varphi$ the dual parametrization of the unit circle. The associated Hill equation is given as in (\ref{hilleq}). We let $m:\mathbb{R}\,\mathrm{mod}\,l(\partial B_a)\rightarrow \mathbb{R}\,\mathrm{mod}\,l(\partial B_a)$ be the function such that
\begin{align*} \psi(t+m(t)) = \frac{\psi'(t)}{||\psi'(t)||_a}.
\end{align*}
Geometrically, $m(t)$ is the number such that $\psi(t+m(t))$ is a vector of the unit anti-circle which is parallel to $\psi'(t)$ and has also the same orientation (see Figure \ref{functionm}). In the next proposition we prove that the function $m(t)$ is affine invariant, and hence can be regarded as an intrinsic object of the Minkowski geometry associated to the equation (\ref{hilleq}).
\begin{figure}[h]
\centering
\includegraphics{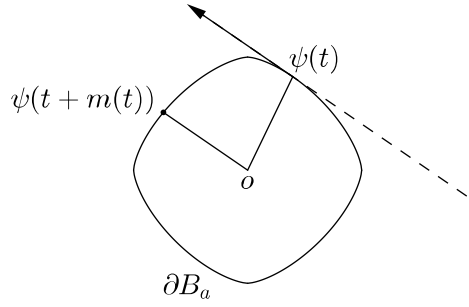}
\caption{The tangent line to $\partial B_a$ at $\psi(t)$ is parallel to $\psi(t+m(t))$.}
\label{functionm}
\end{figure}

\begin{prop} Let $u,v$ be linearly independent solutions of (\ref{hilleq}), and let $x,y \in \mathbb{R}^2$ be linearly independent vectors such that $\omega(x,y) = W(u,v)^{-1}$. If $\bar{m}:\mathbb{R}\,\mathrm{mod}\,l(\partial B_a)\rightarrow\mathbb{R}\,\mathrm{mod}\,l(\partial B_a)$ is the function such that
\begin{align*} \gamma(t+\bar{m}(t)) = \frac{\gamma'(t)}{||\gamma'(t)||_{\bar{a}}},
\end{align*}
where $\gamma(t) = u(t)\cdot x + v(t)\cdot y$ and $||\cdot||_{\bar{a}}$ is the norm induced with $\gamma$ as unit circle, then $\bar{m}(t) = m(t)$ for any $t \in \mathbb{R}\,\mathrm{mod}\,l(\partial B_a)$.
\end{prop}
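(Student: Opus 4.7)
The plan is to reduce $\bar{m}$ to $m$ by exploiting the explicit affine equivalence between $\gamma$ and $\psi$ provided by the preceding theorem. Writing $u=c_1 u_1 + c_2 u_2$ and $v=d_1 u_1 + d_2 u_2$, that theorem furnishes a linear map $A:\mathbb{R}^2\rightarrow\mathbb{R}^2$ with $A(\psi(0))=c_1 x + d_1 y$ and $A(\psi'(0))=c_2 x + d_2 y$ for which $\gamma = A\circ\psi$. Consequently $\gamma'(t)=A\psi'(t)$ for every $t$, and the question becomes how the norm $||\cdot||_{\bar{a}}$ relates to the original anti-norm.

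First I would verify that $\det A = 1$ under the hypothesis $\omega(x,y) = W(u,v)^{-1}$. Using bilinearity and skew-symmetry, $\omega(A\psi(0),A\psi'(0)) = (c_1 d_2 - c_2 d_1)\,\omega(x,y) = W(u,v)\,\omega(x,y) = 1$, while on the other hand $\omega(A\psi(0),A\psi'(0)) = \det A \cdot \omega(\psi(0),\psi'(0)) = \det A$, since $\omega(\psi,\psi')\equiv 1$. This normalization is what guarantees that the new anti-perimeter coincides with $l(\partial B_a)$, so that $m$ and $\bar{m}$ genuinely share the same domain $\mathbb{R}\,\mathrm{mod}\,l(\partial B_a)$.

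Next, since $||\cdot||_{\bar{a}}$ has unit ball $A(B_a)$, one has $||z||_{\bar{a}} = ||A^{-1} z||_a$ for every $z\in\mathbb{R}^2$; in particular $||A\psi'(t)||_{\bar{a}} = ||\psi'(t)||_a$. Substituting $\gamma = A\psi$ and $\gamma' = A\psi'$ into the defining relation for $\bar{m}$ yields
\begin{align*}
A\psi(t+\bar{m}(t)) \;=\; \frac{A\psi'(t)}{||A\psi'(t)||_{\bar{a}}} \;=\; \frac{A\psi'(t)}{||\psi'(t)||_a} \;=\; A\!\left(\frac{\psi'(t)}{||\psi'(t)||_a}\right) \;=\; A\psi(t+m(t)).
\end{align*}
Applying $A^{-1}$ and invoking injectivity of $\psi$ on $\mathbb{R}\,\mathrm{mod}\,l(\partial B_a)$ gives $\bar{m}(t) = m(t)$.

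The only real subtlety I anticipate is keeping the various norms and parametrizations straight: one has to confirm that the definition of $||\cdot||_{\bar{a}}$ via ``unit circle $\gamma$'' is the same as via ``unit ball $A(B_a)$'' (which is immediate from $\gamma = A\psi$), and that the chosen normalization $\omega(x,y) = W(u,v)^{-1}$ is precisely what makes $\gamma$ play, in the affinely transformed geometry, the exact role that $\psi$ plays in the original one. Once those bookkeeping checks are in place, the identity $\bar{m}=m$ is essentially a one-line consequence of $||Az||_{\bar{a}}=||z||_a$ and the bijectivity of the parametrizations.
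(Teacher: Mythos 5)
Your proof is correct and follows essentially the same route as the paper's: both identify $\gamma$ as the image $A\circ\psi$ of the arc-length parametrization of the unit anti-circle under the affine map from the preceding theorem (with $\omega(x,y)=W(u,v)^{-1}$ forcing $\det A=1$, i.e.\ $\omega(\gamma,\gamma')\equiv 1$) and then conclude by affine invariance. Where the paper merely asserts that ``the concepts involved are all invariant under $A$,'' you make this explicit via the identity $\|Az\|_{\bar{a}}=\|z\|_{a}$ and the injectivity of $\psi$ on one period, which is a welcome sharpening rather than a different argument.
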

\begin{proof} The choice of vectors $x,y \in \mathbb{R}^2$ being such that $\omega(x,y) = W(u,v)^{-1}$ guarantees that $\gamma$ is a parametrization of the unit anti-circle by arc-length in the norm. If this hypothesis is dropped, then we get a parametrization by a constant multiple of the arc-length in the norm. Indeed, we have
\begin{align*} \omega(\gamma(t),\gamma'(t)) = \omega(x,y)\cdot(u(t)\cdot v'(t) - u'(t)\cdot v(t)) = \omega(x,y)\cdot W(u,v) = 1,
\end{align*}
and hence the dual parametrization of the unit circle is given as
\begin{align*} \sigma(t) = \frac{\gamma'(t)}{\omega(\gamma(t),\gamma'(t))} = \gamma'(t),
\end{align*}
from which it follows that the length of $\gamma'(t)$ in the norm equals $1$. Hence we geometrically interpret $\bar{m}(t)$ as the length (in the norm) traveled from $\gamma(t)$ to the first vector $\gamma(t+\bar{m}(t))$ which is parallel to $\gamma'(t)$ and has also the same orientation. Since the concepts involved are all invariant under the affine transformation $A$ which carries $\psi(t)$ to $\gamma(t)$, we get that
\begin{align*} A\psi(t+m(t)) = \gamma(t+\bar{m}(t)),
\end{align*}
for each $t \in \mathbb{R}\,\mathrm{mod}\,l(\partial B_a)$. It follows that $t + m(t) = t + \bar{m}(t)$ for every $t$, and hence we have the desired equality.

\end{proof}

As a consequence of this proposition, it follows that $m(t)$ can be defined (and is the same) for any linearly independent solutions $u$ and $v$. One just has to construct the unit anti-circle using linearly independent vectors $x,y \in \mathbb{R}^2$ such that $\omega(x,y) = W(u,v)^{-1}$.

\begin{teo} Let $m(t)$ be the function defined above, associated to the equation (\ref{hilleq}). Then we have:\\

\noindent\textbf{\emph{i.}} The equation comes from a Radon plane if and only if
\begin{align*}\xi(t) := u(t)\cdot v(t+m(t)) - u(t+m(t))\cdot v(t)
\end{align*}
is a constant value for $t \in \mathbb{R}\,\mathrm{mod}\,l(\partial B_a)$, where $u$ and $v$ are any linearly independent solutions. \\

\noindent\textbf{\emph{ii.}} The geometry associated to the equation is Euclidean if and only if $m(t)$ is constant.
\end{teo}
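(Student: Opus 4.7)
The plan is to handle the two parts separately.

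For part \textbf{i}, first expand $\xi$ in the canonical basis of solutions $u_1, u_2$ of Proposition~\ref{solutions}. Writing $u = c_1 u_1 + c_2 u_2$ and $v = d_1 u_1 + d_2 u_2$, a direct expansion yields
\begin{align*}
\xi(t) = (c_1 d_2 - c_2 d_1)\bigl[u_1(t)\, u_2(t+m(t)) - u_1(t+m(t))\, u_2(t)\bigr],
\end{align*}
and the prefactor equals the (constant) Wronskian $W(u,v)$. Using $\psi(s) = u_1(s)\psi(0) + u_2(s)\psi'(0)$ together with $\omega(\psi(0), \psi'(0)) = 1$, the bracket is recognized as $\omega(\psi(t), \psi(t+m(t)))$. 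By definition of $m$, $\psi(t+m(t)) = \psi'(t)/||\psi'(t)||_a$, and since $\omega(\psi(t), \psi'(t)) = 1$, we obtain $\xi(t) = W(u,v)/||\psi'(t)||_a$. Finally, $\psi'(t) = \varphi(t)$ traces $\partial B$ with $||\varphi(t)|| = 1$, so $\xi$ is constant if and only if $||\varphi(t)||_a$ is constant on $\partial B$, i.e., $\partial B$ is a homothet of $\partial B_a$. This is precisely the Radon condition.

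For part \textbf{ii}, the direction Euclidean $\Rightarrow$ $m$ constant follows from affine invariance of $m$ combined with the explicit parametrization $\psi(t) = (\cos t, \sin t)$ of the Euclidean unit circle, which gives $\psi'(t) = \psi(t + \pi/2)$ and hence $m(t) \equiv \pi/2$. For the converse, assume $m(t) \equiv m_0$ and set $\lambda(t) := ||\psi'(t)||_a$, so that $\psi'(t) = \lambda(t)\, \psi(t + m_0)$. The key observation is that the shifted curve $\eta(t) := \psi(t+m_0)$ again satisfies a Hill equation, namely $\eta''(t) = -f(t+m_0)\,\eta(t)$, where $f(t) := \omega(\varphi(t),\varphi'(t))$. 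Substituting the alternative expression $\eta = \psi'/\lambda$ into this equation and using $\psi'' = -f\psi$ to eliminate second derivatives of $\psi$ produces a vector identity of the form $A(t)\psi(t) + B(t)\psi'(t) = 0$. Since $\omega(\psi, \psi') = 1 \neq 0$, the scalars $A$ and $B$ must vanish identically.

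The coefficient $A$ yields $f'\lambda = 2f\lambda'$, equivalent to $f = K\lambda^2$ for some positive constant $K$. The coefficient $B$ gives, after setting $g := \ln \lambda$, the functional identity
\begin{align*}
f(t+m_0) - f(t) = g''(t) - g'(t)^2.
\end{align*}
Integrating over one period $L := l(\partial B_a)$ annihilates the left side (by $L$-periodicity of $f$) and the $g''$ term (by periodicity of $g'$), leaving $\int_0^L g'(t)^2\, dt = 0$. Hence $g' \equiv 0$, so $\lambda$ is constant, and then $f = K\lambda^2$ is constant as well. With $f \equiv K$, equation (\ref{hilleq}) reduces to $u'' + Ku = 0$, whose solutions are trigonometric; consequently $\psi$ parametrizes an ellipse, and the affine invariance established earlier forces the induced geometry to be Euclidean.

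The main obstacle is the algebraic bookkeeping in expanding $(\psi'/\lambda)''$ and reading off the $\psi$- and $\psi'$-coefficients correctly. The decisive insight is the integration-over-a-period trick applied to the $\psi'$-coefficient identity: it converts a functional-differential relation into pointwise constancy of $\lambda$, after which constancy of $f$ and the Euclidean conclusion follow immediately.
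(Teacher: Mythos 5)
Your proof is correct, but both halves run along genuinely different lines than the paper's. For part \emph{i}, the paper also identifies $\xi(t)$ with $\omega(\gamma(t),\gamma(t+m(t)))$ for the arc-length parametrization $\gamma$ of the anti-circle, but then closes the argument by citing an external characterization of Radon curves (Proposition 4.1 of \cite{Ba-Ma-Tei2}); you instead push the computation one step further to the explicit formula $\xi(t)=W(u,v)/\|\varphi(t)\|_a$, which makes both implications self-contained, since constancy of the anti-norm on the unit circle is the Radon condition by homogeneity. For part \emph{ii}, the paper's converse is geometric: constancy of $m$ forces (via conjugate diameters) $m\equiv l(\partial B_a)/4$, so Birkhoff-orthogonal diameters cut the anti-disc into four equal areas, and the Alonso--Ben\'itez area-orthogonality theorem \cite{alonso2} gives Euclidicity. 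Your converse is purely analytic: from $\psi'(t)=\lambda(t)\psi(t+m_0)$ and $\psi''=-f\psi$ you extract the two scalar identities $f'\lambda=2f\lambda'$ and $f(t+m_0)-f(t)=g''-(g')^2$ with $g=\ln\lambda$ (I checked the bookkeeping; both are right), and integrating the latter over a period kills everything except $\int (g')^2$, forcing $\lambda$ and hence $f$ to be constant, so the anti-circle is an ellipse. What your route buys is independence from the external characterization theorems and the slightly stronger conclusion that $f=\omega(\varphi,\varphi')$ is constant; what it costs is the differentiation of $\|\psi'(t)\|_a$, which is legitimate here only because the standing smoothness and strict convexity assumptions make the anti-norm smooth away from the origin -- worth a sentence if you write this up. The first direction of \emph{ii} (Euclidean implies $m\equiv\pi/2$ by affine invariance) matches what the paper leaves implicit.
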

\begin{proof} To prove the first claim, we choose $x,y \in \mathbb{R}^2$ such that $\omega(x,y) = W(u,v)^{-1}$, let $\gamma(t) = u(t)\cdot x + v(t)\cdot y$ be a parametrization of the unit anti-circle by arc-length in the anti-norm, and notice that
\begin{align*} \omega(\gamma(t),\gamma(t+m(t))) = u(t)\cdot v(t+m(t)) - u(t+m(t))\cdot v(t) = \xi(t).
\end{align*}
Since $\gamma(t) \dashv_B^{\bar{a}} \gamma(t+m(t))$, where $\dashv_B^{\bar{a}}$ denotes the Birkhoff orthogonality relation of the norm whose unit circle is $\gamma(t)$, we get that $||\cdot||_{\bar{a}}$ is a Radon norm if $\xi$ is constant, as a consequence of \cite[Proposition 4.1]{Ba-Ma-Tei2}.

Now we prove the second claim. For that sake, recall that the arc-length parametrization of the unit anti-circle in the norm is also a parametrization by twice the areas of the sectors. Hence, if $m$ is constant, then any pair of Birkhoff orthogonal diameters divides the unit anti-circle into four pieces of the same area. This characterizes the planar Euclidean geometry, as it is shown in \cite{alonso2}.

\end{proof}

Actually, the first claim of the theorem can be improved. The solutions for Hill equations associated to Radon planes are characterized by a certain property of their derivatives. In what follows, we recall that we are always assuming that in a Radon plane the symplectic form $\omega$ is always rescaled and so the norm and the anti-norm coincide.

\begin{teo} The equation (\ref{hilleq}) is associated to a Radon plane if and only if there exists a function $g:\mathbb{R}\,\mathrm{mod}\,l(\partial B_a)\rightarrow\mathbb{R}\,\mathrm{mod}\,l(\partial B_a)$ such that the equality
\begin{align*} u'(t) = u(t+g(t))
\end{align*}
holds for any of its solutions.
\end{teo}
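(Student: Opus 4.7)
The plan is to reduce the condition ``for any solution'' to a geometric condition on $\psi$, and then to characterize Radon planes via this condition.

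First I would observe that by linearity of the equation (\ref{hilleq}), the relation $u'(t) = u(t+g(t))$ holds for \emph{any} solution if and only if it holds simultaneously for the two fundamental solutions $u_1,u_2$ of Proposition \ref{solutions}. Writing out $u_1'(t) = u_1(t+g(t))$ and $u_2'(t) = u_2(t+g(t))$ in terms of $\omega$ yields
\begin{align*}
\omega\bigl(\psi'(t) - \psi(t+g(t)),\, \psi'(0)\bigr) &= 0, \\
\omega\bigl(\psi'(t) - \psi(t+g(t)),\, \psi(0)\bigr) &= 0.
\end{align*}
Since $\psi(0)$ and $\psi'(0)$ are linearly independent and $\omega$ is non-degenerate, the condition $u'(t) = u(t+g(t))$ for every solution is \emph{equivalent} to the purely geometric identity $\psi'(t) = \psi(t+g(t))$ for every $t \in \mathbb{R}\,\mathrm{mod}\,l(\partial B_a)$.

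For the forward direction, assume the underlying plane is Radon. Then $\partial B = \partial B_a$, and recalling that $\psi$ is an arc-length parametrization in the norm one has $\|\psi'(t)\| = 1$, so $\psi'(t) \in \partial B = \partial B_a$. Because $\psi$ parametrizes $\partial B_a$ injectively modulo its period and $\partial B_a$ is strictly convex, there is a unique $g(t) \in \mathbb{R}\,\mathrm{mod}\,l(\partial B_a)$ with $\psi(t+g(t)) = \psi'(t)$; smoothness of $g$ follows from smoothness and strict convexity. By the reduction above, this $g$ then satisfies $u'(t) = u(t+g(t))$ for every solution.

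For the converse, suppose such a $g$ exists. The reduction gives $\psi'(t) = \psi(t+g(t)) \in \partial B_a$ for every $t$. But $\psi'(t) = \varphi(t)$ is the dual parametrization of the unit circle, hence $\{\psi'(t) : t\} = \partial B$. Therefore $\partial B \subseteq \partial B_a$, and by positive homogeneity of norms this means $\|x\| = \|x\|_a$ for every $x \in \mathbb{R}^2$ (applying the inclusion to $x/\|x\|$ yields $\|x\|_a/\|x\| = 1$). Thus the norm equals the anti-norm, so the plane is Radon.

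The main technical point is really only the clean reduction in the first step; once the identity $\psi'(t) = \psi(t+g(t))$ is extracted, both directions are essentially immediate from the definition of Radon plane, with the homogeneity trick replacing any appeal to a topological argument about $\partial B \subseteq \partial B_a$.
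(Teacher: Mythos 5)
Your proof is correct and follows essentially the same route as the paper: both arguments hinge on translating the analytic condition $u'(t)=u(t+g(t))$ for all solutions into the vector identity $\psi'(t)=\psi(t+g(t))$, which says precisely that the unit circle (traced by $\psi'=\varphi$) coincides with the unit anti-circle (traced by $\psi$). Your explicit reduction to the two fundamental solutions via linearity and non-degeneracy of $\omega$, and the homogeneity remark in the converse, are just slightly more detailed renderings of the steps the paper carries out componentwise with an arbitrary pair of independent solutions.
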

\begin{proof} Assume first that the associated norm is Radon. Let $u$ be any solution, and let $v$ be a linearly independent solution. If we fix independent vectors $x,y \in \mathbb{R}^2$, then $\psi(t) = u(t)\cdot x + v(t)\cdot y$ is a parametrization of the unit anti-circle by arc-length in the norm. Hence $\varphi(t) = \psi'(t)$ is a parametrization of the unit circle. Since the unit circle and unit anti-circle coincide, we have immediately from the definition of the function $m$ that
\begin{align*} \varphi(t) = u'(t)\cdot x + v'(t)\cdot y = u(t+m(t))\cdot x + v(t+m(t))\cdot y.
\end{align*}
It follows that $u'(t) = u(t+m(t))$ for any $t \in \mathbb{R}\,\mathrm{mod}\,l(\partial B_a)$. Since $u$ is an arbitrary solution, we have the ``if" part.

For the converse, we simply notice that if such function $g$ exists, then for any $t$ we have the equality
\begin{align*} \varphi(t) = u'(t)\cdot x + v'(t)\cdot y = u(t+g(t))\cdot x + v(t+g(t))\cdot y = \psi(t+g(t)),
\end{align*}
and hence the unit circle and the unit anti-circle coincide. Consequently, the plane is Radon.

\end{proof}

\begin{remark} In the Euclidean case, the standard sine and cosine functions are solutions to the associated Hill equation, and they are translates of their derivatives by the constant $\pi/2$. By the previous theorem, we have something similar for a Radon plane, but the constant is replaced by a function, which nevertheless is universal for all the solutions of the associated Hill equation.
\end{remark}

We have yet another characterization of Radon planes by means of the function $m$. In what follows, we regard $m(t)$ as twice the area of the sector of the unit anti-circle swept from $\psi(t)$ to $\psi(t+m(t))$. This is a little abuse of our definition, since we are assuming that $t \in \mathbb{R}\,\mathrm{mod}\,l(\partial B_a)$.

\begin{teo} The geometry associated to the equation (\ref{hilleq}) is Radon if and only if for the function $m(t)$ defined above we have that
\begin{align*} m(t) + m(t+m(t))
\end{align*}
is constant for $t \in \mathbb{R}\,\mathrm{mod}\,l(\partial B_a)$. In this case, the constant value of the expression is $l(\partial B_a)/2$.
\end{teo}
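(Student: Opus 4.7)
The plan is to prove each implication separately, pinning down the constant $l(\partial B_a)/2=\pi$ as part of the converse argument. For the forward direction, assume the plane is Radon. By the definition of $n$, $\psi(t)\dashv_B^{\,a}\psi(n(t))$ for all $t$, and Radon symmetry of Birkhoff orthogonality in the anti-norm gives $\psi(n(t))\dashv_B^{\,a}\psi(t)$; on the other hand, applying the definition of $n$ at $\psi(n(t))$ gives $\psi(n(t))\dashv_B^{\,a}\psi(n^2(t))$. Strict convexity forces the Birkhoff-orthogonal direction to be unique up to sign, so $\psi(n^2(t))=\pm\psi(t)$. Since the tangent to a smooth strictly convex centrally symmetric curve cannot be parallel to the position vector, $m(t)\in(0,\pi)$ for every $t$, so $m(t)+m(n(t))\in(0,2\pi)$; compatibility with $\psi(n^2(t))=\pm\psi(t)$ then forces $m(t)+m(n(t))=\pi=l(\partial B_a)/2$.

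For the converse, assume $m(t)+m(n(t))=c$ is constant, so $n^2(t)=t+c$. Set $\mu(t):=||\psi'(t)||_a$, so $\psi'(t)=\mu(t)\psi(n(t))$, and from $\omega(\psi,\psi')=1$ we obtain $\mu(t)\xi(t)=1$ where $\xi(t):=\omega(\psi(t),\psi(n(t)))$. Differentiating $\psi'=\mu(\psi\circ n)$ once more, substituting into the Hill equation $\psi''+f\psi=0$ (which the vector $\psi$ inherits componentwise from Proposition~\ref{solutions}), and wedging with $\psi(t)$ via $\omega$ yields, after a short computation,
\[
\frac{\mu'(t)}{\mu(t)}=-\,\mu(t)\,n'(t)\,\mu(n(t))\,\omega(\psi(t),\psi(t+c)).
\]
The function $\mu$ is $\pi$-periodic (from $\psi(t+\pi)=-\psi(t)$, so $\psi'(t+\pi)=-\psi'(t)$), hence $\int_0^{l(\partial B_a)}\mu'/\mu\,dt=0$ and the right-hand side also integrates to zero. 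The first three factors in the integrand are strictly positive, while $\omega(\psi(t),\psi(t+c))$ has constant sign in $t$ (positive for $c\in(0,\pi)$, negative for $c\in(\pi,2\pi)$, identically zero for $c=\pi$), by the signed-area property of the centrally symmetric, strictly convex, positively oriented parametrization $\psi$. Vanishing of the integral thus forces the integrand to vanish identically: $\omega(\psi(t),\psi(t+c))\equiv 0$, so $\psi(t+c)\parallel\psi(t)$ for all $t$, which by strict convexity and central symmetry requires $c=\pi$ (since $c\in(0,2\pi)$).

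With $c=\pi$ the right-hand side of the displayed ODE vanishes, so $\mu$ is constant. Since $||\psi'(t)||=1$ for all $t$ and $\psi'$ parametrizes $\partial B$, $\mu$ being constant means the anti-norm is constant on $\partial B$; hence the norm and anti-norm are proportional and the plane is Radon. The delicate step is the integral-plus-sign argument that pins $c=\pi$; once that is in place, the rest of the converse follows cleanly from the Hill equation, the identity $\mu\xi=1$, and the standard properties of $n$.
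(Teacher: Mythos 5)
Your argument is correct, and while the forward direction coincides with the paper's (the paper simply writes $\psi(t+m(t)+m(t+m(t)))=-\psi(t)$ and reads off the constant; you make explicit why the sum is $\pi$ rather than $0$), your converse takes a genuinely different route. The paper pins the constant at $l(\partial B_a)/2$ by invoking the existence of a pair of conjugate diameters: at a parameter $t_0$ realizing such a pair one has $\psi(t_0+m(t_0)+m(t_0+m(t_0)))=-\psi(t_0)$ directly, so the constant equals $\pi$, and then for arbitrary $t$ the chain $\psi(t)\dashv_B^{\,a}\psi(t+m(t))\dashv_B^{\,a}\psi(t+\pi)=-\psi(t)$ shows that Birkhoff orthogonality is symmetric. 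You instead derive, writing $n(t):=t+m(t)$ and $\mu:=||\psi'||_a$, the identity $(\ln\mu)'=-\mu\, n'\,(\mu\circ n)\,\omega(\psi(t),\psi(t+c))$ from the Hill equation, integrate over a full period, and use the constant sign of $\omega(\psi(t),\psi(t+c))$ to force $c=\pi$ and then $\mu\equiv\mathrm{const}$, so that $||\cdot||_a$ is a constant multiple of $||\cdot||$. Both proofs are valid: the paper's is shorter but leans on the external existence result for conjugate diameters, whereas yours is self-contained within the Hill-equation framework of Section \ref{hillsec} and produces, as a byproduct, an explicit first-order ODE for $\ln||\psi'||_a$ that quantifies the failure of the Radon property. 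Two small points worth tightening: justify $m(t)\in(0,\pi)$ by the orientation $\omega(\psi(t),\psi(n(t)))=||\psi'(t)||_a^{-1}>0$ (non-parallelism of tangent and position vector alone only excludes $m(t)\in\{0,\pi\}$), and note that $n'>0$ follows from the standing assumption that the curvature of the unit (anti-)circle does not vanish, which is what makes the sign argument for the integrand conclusive.
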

\begin{proof} If the plane is Radon, then Birkhoff orthogonality is symmetric, and hence
\begin{align*} \psi(t+m(t)+m(t+m(t))) = -\psi(t),
\end{align*}
from where we get immediately that $m(t) + m(t+m(t)) = l(\partial B_a)/2$. For the converse, notice that if $m(t) + m(t+m(t))$ is constant, then the existence of \emph{conjugate diameters} (which are diameters whose directions are mutually Birkhoff orthogonal, see \cite{martini1}) guarantees that it equals $l(\partial B_a)/2$. From the orthogonality relations we obtain
\begin{align*} \psi(t) \dashv_B^a \psi(t + m(t)) \dashv_B^a \psi(t+m(t)+m(t+m(t))) = \psi(t+l(\partial B_a)/2) = -\psi(t),
\end{align*}
for any $t \in \mathbb{R}\,\mathrm{mod}\,l(\partial B_a)$. It follows that Birkhoff orthogonality is a symmetric relation, and hence the geometry is Radon.

\end{proof}

\begin{remark} The geometric meaning of the theorem above is that, in a Radon plane, the symmetry of Birkhoff orthogonality guarantees that applying $m$ iteratively twice is the same as doing a reflection through the origin. This does not happen in the case where the plane is not Radon, as Figure \ref{twicem} below illustrates.
\end{remark}

\begin{figure}[h]
\centering
\includegraphics{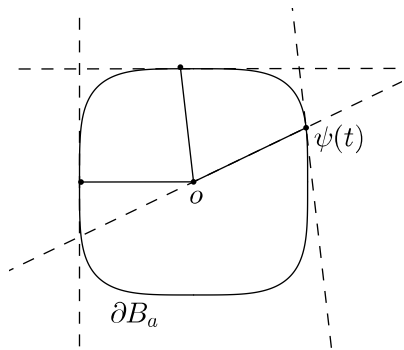}
\caption{$\psi(t+m(t)+m(t+m(t))) \neq -\psi(t)$.}
\label{twicem}
\end{figure}

\section{When is a Hill equation associated to a Minkowski geometry?}\label{condhill}

In this section we study the natural problem under which conditions a given Hill equation (\ref{hill1}) is associated to the geometry of some normed plane. In this direction, we have the following theorem.

\begin{teo}\label{condition} Let $f:\mathbb{R}\,\mathrm{mod}\,2c\rightarrow\mathbb{R}$ be a smooth, strictly positive $2c$-periodic function, where $c > 0$. If the Hill equation
\begin{align*} u''(t) + f(t)\cdot u(t)= 0
\end{align*}
has two independent $c$-antiperiodic solutions, and if the equation
\begin{align*} u''(t) + \lambda\cdot f(t)\cdot u(t) = 0
\end{align*}
has no $c$-antiperiodic solutions for any $0 < \lambda  < 1$, then the Hill equation is associated to the geometry of some normed plane.  
\end{teo}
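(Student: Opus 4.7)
My plan is to build, from the two independent $c$-antiperiodic solutions guaranteed by the hypothesis, a smooth strictly convex centrally symmetric closed curve $\psi$; use it as the unit anti-circle of a Minkowski geometry; and then verify that the Hill equation (\ref{hilleq}) attached to this geometry coincides with $u''+f u=0$.

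Concretely, I would let $u_1,u_2$ be independent $c$-antiperiodic solutions, rescale so their (constant) Wronskian is $W(u_1,u_2)=1$, fix a basis $\{e_1,e_2\}$ with $\omega(e_1,e_2)=1$, and set $\psi(t):=u_1(t)e_1+u_2(t)e_2$. Then $\psi$ is smooth and $\psi(t+c)=-\psi(t)$, so $\psi$ is $2c$-periodic and centrally symmetric. From $\omega(\psi,\psi')=W=1$ one obtains $\psi(t)\neq 0$ for every $t$; and from $\psi''=-f\psi$ one obtains $\omega(\psi',\psi'')=f\cdot\omega(\psi,\psi')=f>0$. Hence $\psi$ is regular and locally strictly convex.

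The crux is to show $\psi$ winds around the origin exactly once. In polar form $\psi(t)=r(t)(\cos\theta(t),\sin\theta(t))$, one has $\theta'(t)=1/|\psi(t)|^2>0$, and central symmetry forces $\theta(c)-\theta(0)=(2k+1)\pi$ for some integer $k\geq 0$. Supposing for contradiction $k\geq 1$, I would introduce the auxiliary family: for $\lambda\in[0,1]$, let $\Phi_\lambda$ be the fundamental matrix at time $c$ of $u''+\lambda f u=0$, and let $\psi_\lambda(t):=u_1^\lambda(t)e_1+u_2^\lambda(t)e_2$ be built from the solutions $u_i^\lambda$ with standard initial conditions $u_1^\lambda(0)=1,(u_1^\lambda)'(0)=0,u_2^\lambda(0)=0,(u_2^\lambda)'(0)=1$. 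Everything depends continuously on $\lambda$. At $\lambda=0$, $\psi_0(t)=(1,t)$, so $\theta_0(c)-\theta_0(0)=\arctan c<\pi/2$; at $\lambda=1$, this angular increase equals $(2k+1)\pi\geq 3\pi$. The intermediate value theorem then produces some $\lambda_*\in(0,1)$ at which $\theta_{\lambda_*}(c)-\theta_{\lambda_*}(0)=\pi$, meaning $\psi_{\lambda_*}(c)=(-\mu,0)$ for some $\mu>0$; i.e.\ $u_1^{\lambda_*}(c)=-\mu$ and $u_2^{\lambda_*}(c)=0$. The constraint $\det\Phi_{\lambda_*}=1$ then forces $(u_2^{\lambda_*})'(c)=-1/\mu$, so $\mathrm{tr}\,\Phi_{\lambda_*}=-\mu-1/\mu\leq -2$. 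Since $\mathrm{tr}\,\Phi_0=2$ and $\lambda\mapsto\mathrm{tr}\,\Phi_\lambda$ is continuous, a second application of the intermediate value theorem yields $\tilde\lambda\in(0,\lambda_*]\subset(0,1)$ with $\mathrm{tr}\,\Phi_{\tilde\lambda}=-2$; combined with $\det\Phi_{\tilde\lambda}=1$, this forces $-1$ to be an eigenvalue and hence produces a $c$-antiperiodic solution of $u''+\tilde\lambda f u=0$, contradicting the hypothesis.

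Hence $k=0$, so $\psi$ bounds a smooth, strictly convex, centrally symmetric body $B_a$, which I would take as the unit anti-ball. Letting $||\cdot||$ be the dual of $||\cdot||_a$ with respect to $\omega$, the identity $\omega(\psi,\psi')=1=||\psi||_a\cdot||\psi'||$ shows $\psi$ is arc-length parametrized in $||\cdot||$, so $\varphi:=\psi'$ is the dual parametrization of the unit circle; then $\omega(\varphi,\varphi')=\omega(\psi',\psi'')=f\cdot\omega(\psi,\psi')=f$, and the associated Hill equation (\ref{hilleq}) is exactly $u''+f u=0$. The hard part is the winding-number argument of the previous paragraph, which is precisely where the hypothesis on $\lambda\in(0,1)$ enters: without it the candidate anti-circle could wrap multiple times and fail to bound a convex body.
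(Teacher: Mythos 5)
Your construction is exactly the paper's: normalize the Wronskian, set $\psi(t)=u_1(t)e_1+u_2(t)e_2$, observe that antiperiodicity gives a closed centrally symmetric curve, that $\omega(\psi,\psi')=1$ gives regularity and the arc-length property, and that $\omega(\varphi,\varphi')=f\cdot W=f$ identifies the associated Hill equation; all of this matches. Where you genuinely diverge is the one step that needs the hypothesis on $0<\lambda<1$, namely that $\psi$ closes up as a \emph{simple} (hence globally convex) curve. The paper settles this in one line by citing the oscillation theorem \cite[Ch.\ 8, Thm.\ 3.1]{coddington}: since $\lambda=1$ is the first antiperiodic eigenvalue, every nontrivial solution has exactly one zero in $[0,c)$, so $\psi$ crosses each line through the origin only twice and cannot self-intersect. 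You instead reprove the relevant piece of oscillation/Floquet theory from scratch: you track the angular increase of the family $\psi_\lambda$ and show that an extra winding at $\lambda=1$ would, by two applications of the intermediate value theorem (first to the angular increase, forcing some $\lambda_*$ with increase exactly $\pi$ and hence $\mathrm{tr}\,\Phi_{\lambda_*}=-\mu-1/\mu\le-2$, then to the trace), produce a monodromy matrix with eigenvalue $-1$ and thus an antiperiodic solution at some $\tilde\lambda\in(0,1)$, contradicting the hypothesis. This argument is correct and has the virtue of being self-contained and of making completely explicit where the spectral hypothesis enters; the paper's version is shorter but leans on the textbook result. The only detail worth spelling out in your write-up is why the angular increase of $\psi_1$ (built from the normalized fundamental solutions) equals the $(2k+1)\pi$ you defined for $\psi$ (built from the given antiperiodic pair): the two curves differ by a linear map of positive determinant (positive because both curves have $\omega(\cdot,\cdot')\equiv 1$), and such a map commutes with $-\mathrm{id}$, hence preserves angular increments that are odd multiples of $\pi$. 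With that remark added, your proof is complete.
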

\begin{proof} Let $u,v$ be two ($c$-antiperiodic) solutions and, for simplicity, assume that $W(u,v) = 1$. We choose linearly independent vectors $x,y \in \mathbb{R}^2$ such that $\omega(x,y) = 1$ and construct the curve
\begin{align*} \psi(t) = u(t)\cdot x + v(t) \cdot y.
\end{align*}
Since $u$ and $v$ are $c$-antiperiodic, this curve is closed and centrally symmetric. Also, we notice that $\psi$ is locally strictly convex since $\omega(\psi(t),\psi'(t)) = 1$ for all $t$. The hypothesis on $\lambda = 1$ being the smaller positive eigenvalue of the associated Sturm-Liouville problem guarantees that both $u$ and $v$ have a unique zero in the interval $[0,c)$, as it is proved in \cite[Chapter 8, Theorem 3.1]{coddington}. Hence $\psi$ does not have any self-intersection before it closes, and thus $\psi$ is strictly convex.

Define now
\begin{align*} \varphi(t) = \psi'(t) = u'(t)\cdot x + v'(t)\cdot y.
\end{align*}
This curve is also closed and centrally symmetric, and it has no self-intersection before it closes. Also,
\begin{align*} \omega(\varphi(t),\varphi'(t)) = u'(t)\cdot v''(t) - u''(t)\cdot v'(t) = f(t)\cdot u(t)\cdot v'(t) - f(t)\cdot u(t)\cdot v'(t) = \\ = f(t)\cdot(u(t)\cdot v'(t) - u'(t)\cdot v(t)) = f(t)\cdot W(u,v)(t) = f(t) > 0.
\end{align*}
Hence $\varphi$ is strictly convex, and moreover $\omega(\varphi(t),\varphi'(t)) = f(t)$. It follows that the Hill equation is associated to the geometry given by $\varphi$ as unit circle and $\psi$ as unit anti-circle. This finishes the proof.

\end{proof}

\begin{remark}\label{remarkdouble} In \cite{Cra-Tei-Ba1} it was proved that if $\lambda = 1$ is a double eigenvalue of the Sturm-Liouville problem, then the \emph{indicatrix} (constructed from the solutions) is locally strictly convex. What we proved is that if we add the hypothesis of $\lambda = 1$ being the \textbf{smallest} positive eigenvalue, then the indicatrix is indeed the boundary of a strictly convex body, and hence we have the geometry of a normed plane. Moreover, what really matters is that the first non-zero eigenvalue $\lambda_1$ of (\ref{hill1}) is double. Indeed, in this case, the Hill equation
\begin{align*} u''(t) + \lambda_1\cdot f(t)\cdot u(t) = 0
\end{align*}
is such that its smallest non-zero eigenvalue is $1$, and this is a double eigenvalue. From Theorem \ref{condition} it follows that the Hill equation above is associated to the geometry of some normed plane. 
\end{remark}

A very natural problem is to decide whether a given Hill equation (\ref{hill1}) is associated to some Minkowski geometry without looking at its solutions, but only at the function $f$. Next we interpret this function as a certain curvature function to derive a necessary condition to what happens. For that, we need a concept of curvature other than circular curvature (introduced in Section \ref{basic}), which we define now. Let $\gamma(s)$ be a smooth curve parametrized by arc-length in a normed plane whose unit circle $\partial B$ is endowed with a parametrization $\varphi(\tau)$ by twice the area of the sectors (and hence by the arc-length in the anti-norm). Let $\tau(s)$ be a function such that $\gamma'(s) = \varphi(\tau(s))$. Then the \emph{Minkowski curvature} of $\gamma$ at $\gamma(s)$ is the number
\begin{align*} k_m(s) := \dot{\tau}(s),
\end{align*}
where the dot denotes the derivative with respect to the parameter $s$. We refer the reader to \cite{Ba-Ma-Sho} for more details on this curvature type, and also to other curvature concepts in normed planes.

\begin{teo}\label{funccurvature} If the Hill equation
\begin{align*} u''(t) + f(t)\cdot u(t) = 0
\end{align*}
induces a Minkowski geometry in the plane, then $f$ is the inverse of the Minkowski curvature of $\partial B$ in the anti-norm. 
\end{teo}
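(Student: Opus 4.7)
The strategy is to directly unwind the definition of Minkowski curvature, applied to $\partial B$ viewed as a smooth convex curve in the anti-norm, and show that the quantity that appears is $1/f$. The key identities are already in hand: by hypothesis, the parametrization $\psi$ of $\partial B_a$ by arc-length in the norm, together with the dual parametrization $\varphi = \psi'$ of $\partial B$, satisfy $\omega(\varphi(t),\varphi'(t))=f(t)$, and by (\ref{dualparam2}) this translates into the clean identity
\begin{align*}
\varphi'(t) \;=\; -f(t)\,\psi(t).
\end{align*}

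To read $\partial B$ as a curve in the anti-normed plane, I would first compute the anti-norm speed of $\varphi$: from the display above and the fact that $\psi$ lies on $\partial B_a$, one gets $\|\varphi'(t)\|_a = f(t)\,\|\psi(t)\|_a = f(t)$. Since $f>0$, the map $s(t) := \int_0^t f(\tau)\,d\tau$ is a smooth strictly increasing reparametrization, and $\tilde{\varphi}(s) := \varphi(t(s))$ is the arc-length parametrization of $\partial B$ in the anti-norm.

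Next I would apply the definition of Minkowski curvature, noting that the ``unit circle of the anti-norm'' is $\partial B_a$, and the appropriate parametrization of $\partial B_a$ by twice the areas of its sectors (equivalently, by arc-length in its own anti-norm, which is the original norm) is precisely $\psi$. Differentiating $\tilde\varphi$ and using $\varphi'(t) = -f(t)\psi(t)$ together with $t'(s)=1/f(t(s))$ gives
\begin{align*}
\tilde{\varphi}'(s) \;=\; \varphi'(t(s))\,t'(s) \;=\; -\psi(t(s)) \;=\; \psi\bigl(t(s) + \pi_a\bigr),
\end{align*}
where $\pi_a := l(\partial B_a)/2$ and the last equality uses central symmetry of $\partial B_a$. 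Thus the angle function is $\tau(s) = t(s)+\pi_a$, whence
\begin{align*}
k_m(s) \;=\; \dot\tau(s) \;=\; t'(s) \;=\; \frac{1}{f(t(s))},
\end{align*}
which is exactly the claim that $f$ equals the reciprocal of the Minkowski curvature of $\partial B$ in the anti-norm at the corresponding point.

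I do not expect any serious obstacle; the only delicate point is conceptual rather than technical, namely making sure that the curvature is being computed in the right world (the anti-norm), with the right reference ``unit circle'' ($\partial B_a$) and the right parametrization of that reference curve (by twice the sector area, which for $\partial B_a$ coincides with arc-length in the original norm, so exactly $\psi$). Once that dictionary is set up, the computation is two lines.
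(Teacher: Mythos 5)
Your proposal is correct and follows essentially the same route as the paper: reparametrize $\partial B$ by arc-length in the anti-norm, observe that its unit tangent at parameter $s$ is (up to the antipodal shift forced by your sign convention $\varphi=\psi'$ versus the paper's $\varphi=-\psi'$) the point $\psi(t(s))$ of $\partial B_a$, and read off $k_m(s)=\dot t(s)=1/f(t(s))$. The half-period shift $\pi_a$ is harmless since it disappears upon differentiation, so the argument matches the paper's proof.
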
 
\begin{proof} As usual, let $\psi(t)$ be a parametrization of the unit anti-circle by the length in the original norm (hence a parametrization by twice the area of the sectors), and let $\varphi(t) = -\psi'(t)$ be the dual parametrization of the unit circle. Hence the associated Hill equation is given as
\begin{align*} u''(t) + \omega(\varphi(t),\varphi'(t))\cdot u(t) = 0,
\end{align*}
and we claim that the function $f(t) = \omega(\varphi(t),\varphi(t))$ is the inverse of the Minkowski curvature of the unit circle $\partial B$ in the anti-norm (or the \emph{arc-length curvature} of $\partial B$ in the norm, see \cite{Ba-Ma-Sho}). First of all, let $\varphi(s) := \varphi(t(s))$ be a parametrization of the unit circle by arc-length in the anti-norm, where $t(s)$ is an increasing function. Differentiating and evaluating in the anti-norm one gets
\begin{align*} 1 = ||\dot{\varphi}(s)||_a = \dot{t}(s)\cdot||\varphi'(t)||_a = \dot{t}(s)\cdot\omega(\varphi(t),\varphi'(t)),
\end{align*}
and hence $\dot{t}(s) = \omega(\varphi(t),\varphi'(t))^{-1}$. Now, notice that
\begin{align*} \psi(t(s)) = \frac{\varphi'(t)}{\omega(\varphi(t),\varphi'(t))} = \dot{t}(s)\cdot\varphi'(t) = \dot{\varphi}(s),
\end{align*}
from which the Minkowski curvature $k_m(s)$ of $\varphi$ at $\varphi(t(s))$ calculated in the anti-norm is given by $k_m(s) = \dot{t}(s) = \omega(\varphi(t),\varphi'(t))^{-1}$. Hence our claim is proved. 

\end{proof}

As a consequence of these last results we obtain that an analytic property of (\ref{hill1}) can be rewritten in geometric terms. The proof is straightforward. 

\begin{coro} Let $f:\mathbb{R}\,\mathrm{mod}\,c \rightarrow \mathbb{R}$ be a smooth, strictly positive, $c$-periodic function. Then the first positive eigenvalue $\lambda_1$ of the Sturm-Liouville problem
\begin{align*} u''(t) + \lambda\cdot f(t)\cdot u(t) = 0
\end{align*}
is double if and only if $\lambda_1\cdot f(t)$ is the inverse of the Minkowski curvature function of some Minkowski ball calculated in its respective anti-norm.  
\end{coro}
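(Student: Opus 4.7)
The plan is to reduce both implications to Theorem \ref{funccurvature} paired with Remark \ref{remarkdouble}, after rescaling the eigenvalue parameter. The substitution $\mu = \lambda/\lambda_1$ transforms the Sturm-Liouville problem $u'' + \lambda f u = 0$ into $u'' + \mu \cdot (\lambda_1 f) u = 0$, and under this bijection $\lambda_1$ is the first positive eigenvalue of the original problem if and only if $\mu = 1$ is the first positive eigenvalue of the rescaled one, with the same multiplicity.

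For the forward direction, I assume $\lambda_1$ is a double eigenvalue. After rescaling, $\mu = 1$ is the first positive eigenvalue of $u'' + \mu \cdot (\lambda_1 f) u = 0$ and is double. Remark \ref{remarkdouble} then asserts that the Hill equation $u'' + (\lambda_1 f) u = 0$ is associated to the geometry of some normed plane, and Theorem \ref{funccurvature} identifies the coefficient $\lambda_1 f(t)$ as the inverse of the Minkowski curvature of the unit ball of that plane calculated in its anti-norm.

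For the backward direction, assume $\lambda_1 f(t) = 1/k_m(t)$, where $k_m$ is the Minkowski curvature of some Minkowski ball $\partial B$ in its anti-norm. I parametrize $\partial B$ as $\varphi(t)$ by arc-length in the anti-norm (equivalently, by twice the area of the sectors); the computation inside the proof of Theorem \ref{funccurvature} then yields $\omega(\varphi(t),\varphi'(t)) = 1/k_m(t) = \lambda_1 f(t)$. Consequently, $u'' + (\lambda_1 f) u = 0$ is exactly the Hill equation (\ref{hilleq}) arising from the Minkowski geometry with $\partial B$ as unit circle. The eigenvalue listing in the Remark closing Section \ref{evocycl} guarantees that $\mu = 1$ is a double first positive eigenvalue of every Sturm-Liouville problem $u'' + \mu \omega(\varphi,\varphi') u = 0$ coming from a Minkowski geometry, so the rescaled problem $u'' + \mu \cdot (\lambda_1 f) u = 0$ has $\mu = 1$ as a double first positive eigenvalue. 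Undoing the substitution returns $\lambda_1$ as a double eigenvalue of the original Sturm-Liouville problem.

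No step presents a serious obstacle; the entire argument amounts to bookkeeping around the bijective rescaling $\mu = \lambda/\lambda_1$ combined with the characterizations already supplied by Theorem \ref{funccurvature} and Remark \ref{remarkdouble}, which is why the paper records the proof as straightforward.
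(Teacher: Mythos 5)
Your argument is correct and is exactly the route the paper intends when it records the proof as straightforward: rescale via $\mu = \lambda/\lambda_1$, use Remark \ref{remarkdouble} (hence Theorem \ref{condition}) plus Theorem \ref{funccurvature} for the forward direction, and Theorem \ref{funccurvature} plus the double eigenvalue $\lambda=1$ from the eigenvalue listing of \cite{Cra-Tei-Ba1} for the converse. One slip to fix in the backward direction: the parametrization $\varphi(t)$ for which $\omega(\varphi(t),\varphi'(t)) = 1/k_m(t)$ is \emph{not} the arc-length parametrization of $\partial B$ in the anti-norm (for that one $\omega(\varphi,\varphi')\equiv 1$), but the dual parametrization $\varphi = \pm\psi'$ of Theorem \ref{funccurvature}, where $\psi$ parametrizes $\partial B_a$ by arc-length in the norm and $k_m$ is re-expressed in the variable $t$; with that substitution the rest of your computation stands.
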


A conjecture in the paper \cite{Cra-Tei-Ba1} refers to the question whether a normed plane is necessarily Euclidean if all of the eigenvalues of the associated Sturm-Liouville problem are double. In this direction, we prove that the Euclidean case is characterized if some non-unit eigenvalue (with antiperiodic boundary conditions) induces the original geometry. 

In what follows, we assume (as usual) that $\psi$ is a parametrization of the unit anti-circle $\partial B_a$ of a given normed plane by arc-length in the norm, and that $\varphi$ is the dual parametrization of the unit circle. Hence the associated Hill equation is, of course, given as in (\ref{hilleq}).

\begin{teo} Assume that there exists a positive number $\lambda \neq 1$ such that the equation
\begin{align}\label{eqlambda} u''(t) + \lambda\cdot\omega(\varphi(t),\varphi'(t))\cdot u(t) = 0
\end{align}
admits a $c$-antiperiodic solution which is a linear reparametrization of a non-zero solution of (\ref{hilleq}). Then the curve $\varphi$ induces an Euclidean norm. 
\end{teo}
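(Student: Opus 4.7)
The plan is to substitute the hypothesized reparametrization into (\ref{eqlambda}), derive a functional identity that forces $f(t) := \omega(\varphi(t), \varphi'(t))$ to be constant, and then read off that $\partial B_a$ is an ellipse. Write the given $c$-antiperiodic solution of (\ref{eqlambda}) as $u(t) = v(at+b)$ for some $a \neq 0$ and $b \in \mathbb{R}$, with $v$ a non-zero solution of (\ref{hilleq}). Differentiating gives $u''(t) = -a^2\, f(at+b)\, v(at+b)$, and inserting this into (\ref{eqlambda})---together with the fact that the zeros of a non-zero solution of a second-order linear ODE are isolated---yields the pointwise identity
\begin{align*}
a^2\, f(at+b) = \lambda\, f(t), \qquad t \in \mathbb{R}.
\end{align*}
For $a \neq 1$, iterating $n$ times gives $f\bigl(a^n t + \tfrac{a^n - 1}{a - 1}\, b\bigr) = (\lambda/a^2)^{n}\, f(t)$. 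Because $f$ is continuous, periodic, and strictly positive, it is bounded above and bounded away from $0$; the right-hand side must therefore remain in a fixed compact subinterval of $(0,\infty)$ for every $n$, forcing $\lambda = a^2$. The cases $a = \pm 1$ each reduce (via the same boundedness argument) to $\lambda = 1$, which is excluded by hypothesis, so $|a| \neq 1$, and the identity simplifies to $f(at+b) = f(t)$.

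To extract that $f$ is constant, I would let $t_0 := b/(1-a)$ be the fixed point of the affine map $t \mapsto at+b$ and set $g(s) := f(s + t_0)$; the identity becomes $g(as) = g(s)$ for every $s \in \mathbb{R}$. If $|a| > 1$, substituting $s \mapsto s/a$ and iterating yields $g(s) = g(s/a^n) \to g(0)$ as $n \to \infty$ by continuity of $g$ at $0$; if $|a| < 1$, then $g(s) = g(a^n s) \to g(0)$ directly. Either way, $g$---and hence $f$---is constant.

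Combining the dual parametrization identities (\ref{dualparam1}) and (\ref{dualparam2}) yields $\psi''(t) = \varphi'(t) = -f(t)\, \psi(t)$ identically; with $f \equiv \kappa > 0$ constant, $\psi$ solves $\psi''(t) = -\kappa\, \psi(t)$ with general solution
\begin{align*}
\psi(t) = \cos\bigl(\sqrt{\kappa}\, t\bigr)\, \psi(0) + \frac{\sin\bigl(\sqrt{\kappa}\, t\bigr)}{\sqrt{\kappa}}\, \psi'(0),
\end{align*}
which (since $\omega(\psi(0),\psi'(0)) = 1 \neq 0$) parametrizes an ellipse. Hence $\partial B_a$ is an ellipse, and by duality $\partial B$ is an ellipse as well, so $||\cdot||$ is Euclidean. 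The principal obstacle is the boundedness argument pinning $\lambda = a^2$: it relies essentially on $f$ being bounded both above and away from zero, a property built into our geometric hypothesis but not available for a generic positive Hill coefficient.
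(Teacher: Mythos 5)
Your proof is correct, and it follows the paper's overall strategy (derive the functional identity $a^2 f(at+b)=\lambda f(t)$ from the reparametrization, use the boundedness of the positive periodic coefficient to force $\lambda=a^2$, and then show that the resulting invariance of $f$ under the affine map forces $f$ to be constant), but it diverges at the last analytic step in a way worth noting. The paper, working with $v(t)=u(\alpha t)$, deduces from $f(\alpha t)=f(t)$ and the $c$-periodicity of $f$ that $f$ is constant on the additive subgroup $G=\{c(k_1\alpha^{p_1}+\ldots+k_n\alpha^{p_n})\}$, and then invokes density of $G$ in $\mathbb{R}$; this argument genuinely needs the periodicity of $f$ a second time. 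You instead conjugate by the fixed point $t_0=b/(1-a)$ of the affine map, obtain $g(as)=g(s)$, and let the orbit $a^{\pm n}s\to 0$ do the work, so that only continuity at a single point is used; this is more elementary, avoids the density argument entirely, and at the same time covers the more general affine reparametrization $t\mapsto at+b$ rather than just $t\mapsto\alpha t$ (your separate disposal of $a=\pm1$ is needed for this and is handled correctly, though for $a=-1$ the clean way is to compose the identity with itself to get $\lambda^2=1$ rather than to appeal to boundedness). You also spell out the final geometric step --- that $f\equiv\kappa$ turns $\psi''=-\kappa\psi$ via (\ref{dualparam1}) and (\ref{dualparam2}) into the equation of an ellipse, whence the norm is Euclidean --- which the paper leaves implicit; this is a welcome addition rather than a deviation.
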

\begin{proof} We have to prove that $f(t) := \omega(\varphi(t),\varphi'(t))$ is a constant function. Let $u$ be a non-zero solution of (\ref{hilleq}) and $v(t) = u(\alpha t)$ be a solution of (\ref{eqlambda}), where $\alpha$ is some constant. Then one gets
\begin{align*} \alpha^2\cdot u(\alpha t) = v''(t) = -\lambda\cdot f(t)\cdot v(t).
\end{align*}
On the other hand, we have $u''(\alpha t) = -f(\alpha t)\cdot u(\alpha t) = -f(\alpha t)\cdot v(t)$. This leads to the equality
\begin{align*} \alpha^2\cdot f(\alpha t)\cdot v(t) = \lambda\cdot f(t)\cdot v(t).
\end{align*}
Since $v(t)$ has isolated zeros, it follows from continuity that
\begin{align}\label{fprop} f(\alpha t) = \frac{\lambda}{\alpha^2}\cdot f(t) = \beta\cdot f(t).
\end{align}
From now on we use $\beta = \lambda/\alpha^2$ and prove that $\beta = 1$. Indeed, from continuity and compactness we have that $0 < \min_{\mathbb{R}} f \leq \max_{\mathbb{R}} f < \infty$, where we recall that $f$ is $c$-periodic. Then we have that $f(0) < \beta^n\min_{\mathbb{R}} f$ for $n$ sufficiently large if $\beta > 1$ and $f(0) > \beta^n\max_{\mathbb{R}} f$ for $n$ sufficiently large if $\beta < 1$. If $n$ is large enough such that one of the estimates above holds, then we choose $t_0 \in \mathbb{R}$ and $N \in \mathbb{Z}$ such that $\alpha^nt _0= Nc$. We get
\begin{align*} \beta^nf(t_0) = f(\alpha^nt_0) = f(Nc) = f(0),
\end{align*} 
which is a contradiction, since we obviously have $\beta^n\min_{\mathbb{R}}f \leq \beta^nf(t_0) \leq \beta^n\max_{\mathbb{R}}f$. 

Notice that there are two consequences of the equality $\beta = 1$. The first is that $\alpha \neq 1$, and we also may assume that, without loss of generality, $\alpha > 0$. Also, equality (\ref{fprop}) reads
\begin{align*} f(\alpha t) = f(t),
\end{align*}
for any $t \in \mathbb{R}$. We claim that this condition, together with the $c$-periodicity of $f$, implies that $f$ is constant. Of course, we only need to prove it if $c \neq 0$. Notice that we have
\begin{align*} f(t+\alpha^pkc) = f\left(\alpha^p\left(\frac{t}{\alpha^p}+kc\right)\right) = f\left(\frac{t}{\alpha^p}+kc\right) = f\left(\frac{t}{\alpha^p}\right) = f(t),
\end{align*}
for any $t \in \mathbb{R}$ and any $p,k\in\mathbb{Z}$. It follows that
\begin{align*} f(0) = f(c(k_1\alpha^{p_1}+k_2\alpha^{p_2}+\ldots + k_n\alpha^{p_n}))
\end{align*}
for any $n\in\mathbb{N}$ and any numbers $k_1,\ldots,k_n,p_1,\ldots,p_n \in \mathbb{Z}$. Since
\begin{align*} G := \{c(k_1\alpha^{p_1}+k_2\alpha^{p_2}+\ldots + k_n\alpha^{p_n}):n\in\mathbb{N}, \ k_1,\ldots,k_n,p_1,\ldots,p_n \in \mathbb{Z}\}
\end{align*}
is an additive subgroup of $\mathbb{R}$ with no least positive element, it follows that $G$ is dense in $\mathbb{R}$. From continuity we have that $f$ is constant. This finishes the proof. 

\end{proof}

\end{document}